\theoremstyle{plain}
\newtheorem{theorem}{Theorem}[section]
\newtheorem{lemma}[theorem]{Lemma}
\theoremstyle{definition}
\newcommand{\beq}{\begin{equation}}
\newcommand{\eeq}{\end{equation}}
\newcommand{\beqas}{\begin{eqnarray*}}
\newcommand{\eeqas}{\end{eqnarray*}}
\newcommand{\ve}{v^\varepsilon}
\def\PP{\mathbb{P}}
\def\ve{{\varepsilon}}
\def\le{\leqslant}
\def\ge{\geqslant}
\def\E{{\mathbb E}}
\def\R{{\mathbb R}}
\def\N{{\mathbb N}}
\def\T{{\mathbb T}}
\def\Z{{\mathbb Z}}
\def\a{{\alpha}}
\def\b{{\beta}}
\def\d{{\delta}}
\def\D{{\Delta}}
\def\g{{\gamma}}
\def\s{{\sigma}}
\def\l{{\lambda}}
\def\th{{\theta}}
\def\o{{\omega}}
\def\bc{\begin{color}{blue}}
\def\bg{\begin{color}{blue}}
\def\ec{\end{color}}
\def\eg{\end{color}}
\def\cF{{\cal F}}
\def\cL{{\cal L}}
\def\q{\quad}
\def\pd{{\partial}}
\def\capp{\operatorname{cap}}
\def\divv{\operatorname{div}}
\def\var{\operatorname{var}}
\def\rad{\operatorname{rad}}
\def\<{\langle}
\def\>{\rangle}
\def\sse{\subseteq}
\def\es{\emptyset}
\providecommand*\Index[1]{#1\index{#1}}
\providecommand*\undex[1]{} 
\begin{document}
  \alphafootnotes
  \author[P. H. Haynes, V. H. Hoang, J. R. Norris and K. C. Zygalakis]%
    {P. H. Haynes\footnotemark, V. H. Hoang\footnotemark,
      J. R. Norris\footnotemark\ and K. C. Zygalakis\footnotemark }

  \chapter[Homogenization for advection-diffusion in a perforated domain]{Homogenization for advection-diffusion in a perforated domain}

  \footnotetext[1]{Department of Applied Mathematics and Theoretical Physics,
Centre for Mathematical Sciences, Wilberforce Road, Cambridge CB3 0WA;
phh1@cam.ac.uk}
\footnotetext[2]{Division of Mathematical Sciences, Nanyang Technological
  University, SPMS--MAS--04--19, 21 Nanyang Link, Singapore 637371;
  VHHOANG@ntu.edu.sg}
\footnotetext[3]{Statistical Laboratory, Centre for Mathematical Sciences,
  Wilberforce Road, Cambridge, CB3 0WB;
  j.r.norris@statslab.cam.ac.uk}
\footnotetext[4]{Mathematical Institute, 24--29 St Giles', Oxford OX1 3LB;
  zygalakis@maths.ox.ac.uk. Supported by a David Crighton Fellowship and by
  Award No. KUK--C1--013--04 made by King Abdullah University of Science and
  Technology (KAUST).}
  \arabicfootnotes

  \contributor{Peter H. Haynes
    \affiliation{University of Cambridge}}

  \contributor{Viet Ha Hoang
    \affiliation{Nanyang Techological University, Singapore}}

  \contributor{James R. Norris
    \affiliation{University of Cambridge}}

  \contributor{Konstantinos C. Zygalakis
    \affiliation{University of Oxford}}
\renewcommand\thesection{\arabic{section}}
\numberwithin{equation}{section}
\renewcommand\theequation{\thesection.\arabic{equation}}
\numberwithin{figure}{section}
\renewcommand\thefigure{\thesection.\arabic{figure}}

 \begin{abstract}
The volume of a Wiener sausage constructed from a diffusion process with periodic, mean-zero, divergence-free velocity field, 
in dimension $3$ or more, is shown to have a non-random and positive asymptotic rate of growth. This is used to
establish the existence of a homogenized limit for such a diffusion when subject to Dirichlet conditions
on the boundaries of a sparse and independent array of obstacles. There is a constant effective long-time loss rate at the
obstacles. The dependence of this rate on the form and intensity of the obstacles and on the velocity field is investigated.
A Monte Carlo algorithm for the computation of the volume growth rate of the sausage is introduced and some
numerical results are presented for the Taylor--Green velocity field.
 \end{abstract}

\subparagraph{AMS subject classification (MSC2010)}60G60, 60G65, 35B27, 65C05


\section{Introduction}
We consider the problem of the existence and characterization of a homogenized limit
for 
\index{advection-diffusion|(}advection-diffusion in a 
\index{perforated domain|(}perforated domain. 
This problem was initially motivated for us as a model for the transport of
\Index{water vapour} in the atmosphere, subject to
molecular diffusion and turbulent advection, where the vapour is also lost by
condensation on suspended
\Index{ice crystals}.
It is of interest to determine the long-time rate of loss and in particular whether this is strongly affected
by the advection.
In this article we address a simple version of this set-up, where the advection is periodic in space and constant
in time and where the ice crystals remain fixed in space.

Let $K$ be a compact subset of $\R^d$ of positive
\index{Newton, I.!Newtonian capacity}Newtonian capacity. 
We assume throughout that $d\ge3$.
Let $\rho\in(0,\infty)$.
We consider eventually the limit $\rho\to0$.
Construct a random perforated domain $D\sse\R^d$ by removing all the sets $K+p$, where
$p$ runs over the support $P$ of a
\index{Poisson, S. D.!Poisson random measure}Poisson random measure $\mu$ on
$\R^d$ of intensity $\rho$.
Let $v$ be a $\Z^d$-periodic,
\index{Lipschitz, R. O. S.}Lipschitz, mean-zero, divergence-free
\Index{vector field} on $\R^d$.
Our aim is to determine the long-time behaviour, over times of order $\s^2=\rho^{-1}$, of advection-diffusion in the domain $D$
corresponding to the
\index{operator}operator\footnote{All results to follow extend to the case of the operator
$\frac12\divv a\nabla+v(x).\nabla$, where $a$ is a constant positive-definite
symmetric matrix, by a straightforward scaling transformation. We simplify the presentation by taking $a=I$. Results for the
case $a=\ve^2I$ are stated in Section \ref{DVE} for easy reference.}
$$
\cL=\tfrac12\Delta+v(x).\nabla
$$
with
\index{Dirichlet, J. P. G. L.!Dirichlet boundary condition}Dirichlet boundary
conditions. It is well known (see Section \ref{HPD})
that the long-time behaviour of advection-diffusion in the whole space $\R^d$ can be
approximated by classical, homogeneous,
\index{heat flow/loss|(}heat-flow, with a constant
\index{diffusivity}diffusivity matrix $\bar a=\bar a(v)$.
The effect of placing Dirichlet boundary conditions on the sets $K+p$
is to induce a loss of heat. 
The
\index{homogenization|(}homogenization problem in a perforated domain has been considered already
in the case of
\index{Brown, R.!Brownian motion|(}Brownian motion \cite{MR1737552},
\index{Kac, M.}\cite{MR0510113},
\index{Papanicolau, G. C.}\index{Varadhan, S. R. S.}\cite{MR609184},
\index{Rauch, J.}\index{Taylor, M.}\cite{MR0377303}
and Brownian motion with constant drift
\index{Eisele, T.}\index{Lang, R.}\cite{MR863722}.
The novelty here is to explore the possible interaction between inhomogeneity
in the drift and in the domain.
We will show that as $\rho\to0$ there exists an effective constant loss rate $\bar\l(v,K)$ in the time-scale $\s^2$.
We will also identify the limiting values of $r^{2-d}\bar\l(v,rK)$ as $r\to0$ and $r\to\infty$ and we will compute numerically this function
of $r$ for one choice of $v$ and $K$.

Fix a function $f\in L^2(\R^d)$. Write $u=u(t,x)$ for the solution to the
\index{Cauchy, A. L.!Cauchy problem}Cauchy problem for $\cL$
in $[0,\infty)\times D$ with initial data $f$, and with
Dirichlet conditions on the boundary of $D$. 
Thus, for suitably regular $K$ and $f$, $u$ is continuous on $[0,\infty)\times D$ and on $(0,\infty)\times\bar D$,
and is $C^{1,2}$ on $(0,\infty)\times D$; we have $u(0,x)=f(x)$ for all $x\in D$ and
$$
\frac{\pd u}{\pd t}=\tfrac12\D u+v(x).\nabla u\q\text{on $(0,\infty)\times D$}.
$$
We shall study the behaviour of $u$ over large scales in the limit $\rho\to0$.
Our analysis will rest on the following probabilistic representation of $u$.
Let $X$ be a
\index{diffusion!diffusion process}diffusion process in $\R^d$, independent
of $\mu$ with
\Index{generator} $\cL$ starting from $x$.
Such a process can be realised by solving the
\Index{stochastic differential equation}
\begin{equation}\label{SDE}
dX_t=dW_t+v(X_t)\,dt,\q X_0=x
\end{equation}
driven by a
Brownian motion $W$ in $\R^d$. Set
$$
T=\inf\{t\ge0:X_t\in K+P\}.
$$
Then
$$
u(t,x)=\E_x\left(f(X_t)1_{\{T>t\}}\middle|\mu\right).
$$

The key step is to express the right hand side of this identity in terms of an
analogue for $X$ of the
\index{Wiener, N.!Wiener sausage}Wiener sausage.
Associate to each path $\g\in C([0,\infty),\R^d)$ and to each interval $I\sse[0,\infty)$ a set $S_I^K(\g)\sse\R^d$ formed of the 
translates of $K$ by $\g_t$ as $t$ ranges over $I$. Thus
$$
S_I^K(\g)=\cup_{t\in I}(K+\g_t)=\{x\in \R^d: x-\g_t\in K\ {\rm for\ some}\ t\in I\}.
$$
Write $S_t^K$ for the
\Index{random set} $S_{(0,t]}^K(X)$ and write $|S_t^K|$ for the Lebesgue volume of $S_t^K$. 
We call $S_t^K$ the
\index{diffusion!diffusion sausage|(}\emph{diffusion sausage} or
\emph{$(X,K)$-sausage} and refer to $K$ as the
\undex{cross section}\emph{cross section}.
Then $T>t$ if and only if $\mu(S_t^{\hat K})=0$, where $\hat K=\{-x:x\in K\}$. 
Hence
$$
u(t,x)=\E_x\left(f(X_t)1_{\{\mu(S_t^{\hat K})=0\}}\middle|\mu\right)
$$
and so, by Fubini, we obtain the formulae
\begin{equation}\label{EU}
\E(u(t,x))=\E_x\left(f(X_t)\exp(-\rho|S_t^{\hat K}|)\right)
\end{equation}
and 
\begin{equation}\label{EUS}
\E(u(t,x)^2)=\E_x\left(f(X_t)f(Y_t)\exp(-\rho|S_t^{\hat K}(X)\cup S_t^{\hat K}(Y)|)\right)
\end{equation}
where $Y$ is an independent copy of $X$.

In the next section we review the homogenization theory for $\cL$ in the whole space.
Then, in Section \ref{KSA} we show, as a straightforward application of
\index{Kingman, J. F. C.!Kingman subadditive ergodic theorem}Kingman's
subadditive ergodic
theorem, that the sausage volume $|S_t^K|$ has almost surely an asymptotic growth rate $\g(v,K)$,
which is non-random.
In Section \ref{EDS} we make some further preparatory estimates on diffusion sausages.
Then in Section \ref{AGR} we identify the limiting values of $r^{2-d}\g(v,rK)$ as $r\to0$ and as $r\to\infty$.
In Section \ref{HH}, we use the formulae (\ref{EU}), (\ref{EUS}) to deduce the existence of a homogenized
scaling limit for the function $u$, and we prove a corresponding weak limit for the
diffusion process $X$ and the \Index{hitting time} $T$.
We shall see in particular that for large obstacles it is the effective diffusivity $\bar a$
which accounts for the loss of heat in the obstacles. On the other hand, when the obstacles are small, the loss of heat is controlled instead by the 
molecular diffusivity, even over scales where the diffusive motion itself is close to its homogenized limit.
Some results for non-unit molecular 
\index{diffusivity|(}diffusivity are recorded in Section \ref{DVE}.
Finally, in Section \ref{KZ}, we describe a new
\index{Markov, A. A.!Markov chain Monte Carlo (MCMC)}Monte Carlo algorithm to
compute the volume growth rate for the
\index{diffusion!diffusion sausage|)}$(X,K)$-sausage, 
and hence the effective long-time rate of loss of
heat\index{heat flow/loss|)}.
We present some numerical results obtained using the algorithm which interpolate between our theoretical predictions for large and small obstacles.\index{advection-diffusion|)}

\section{Review of homogenization for diffusion with periodic drift}\label{HPD}
There is a well known homogenization theory for
\index{diffusion!ldiffusion@$\cal L$-diffusion|(}$\cL$-diffusion in the whole
space $\R^d$. 
See
\index{Aronson, D. G.}\cite{MR0217444},
\index{Bensoussan, A.}\index{Lions, J.-L.}\index{Papanicolau, G. C.}\cite{MR503330},
\index{Jikov, V. V.}\index{Kozlov, S. M.}\index{Olenik, O. A.@Ole\u\i nik, O. A.}\cite{MR1329546},
\cite{MR1482931}.
We review here a few basic facts which provide the background for our treatment of the case
of a \index{perforated domain|)}perforated domain.
Our hypotheses on $v$ ensure the existence of a periodic,
\index{Lipschitz, R. O. S.|(}Lipschitz, antisymmetric
\index{tensor field|(}$2$-tensor field $\b$ on
$\R^d$ such that $\frac12\divv\b=v$. 
So we can write $\cL$ in the form
$$
\cL=\tfrac12\divv(I+\b(x))\nabla.
$$
Then $\cL$ has a continuous \index{heat kernel}heat kernel
$p:(0,\infty)\times\R^d\times\R^d$ and there exists a constant $C<\infty$,
depending only on the Lipschitz constant of $v$, such that, for all $t$, $x$ and
$y$,
\begin{equation}\label{AE}
C^{-1}\exp\{-C|x-y|^2/t\}\le p(t,x,y)\le C\exp\{-|x-y|^2/Ct\}.
\end{equation}
Moreover, $C$ may be chosen so that there also holds the following
\index{Gauss, J. C. F.!Gaussian tail estimate}Gaussian tail estimate for the
diffusion process
$X$ with \Index{generator} $\cL$ starting from $x$: for all $t>0$ and $\d>0$,
\begin{equation}\label{GTE}
\PP_x\left(\sup_{s\le t}|X_s-x|>\d\right)\le Ce^{-\d^2/Ct}.
\end{equation}

The preceding two estimates show a qualitative equivalence between $X$ and
Brownian motion, valid on all scales.
On large scales this can be refined in quantitative terms. Consider the quadratic form $q$ on $\R^d$
given by
$$
q(\xi)=\inf_{\th,\chi}\int_{\T^d}|\xi-\divv\chi+\b\nabla\th|^2\,dx
$$
where the infimum is taken over all Lipschitz functions $\th$ and all
\index{Lipschitz, R. O. S.|)}Lipschitz antisymmetric
\index{tensor field|)}$2$-tensor fields $\chi$
on the \Index{torus} $\T^d=\R^d/\Z^d$.
The infimum is achieved, so there is a positive-definite symmetric matrix $\bar a$ such that 
$$
q(\xi)=\<\xi,\bar a^{-1}\xi\>.
$$
The choice $\th=0$ and $\chi=0$ shows that $\bar a\ge I$. As the
\Index{velocity field} $v$ is scaled up, typically
it is found that $\bar a$ also becomes large. See for example
\index{Fannjiang, A.}\index{Papanicolau, G. C.|(}\cite{MR1265233} for further
discussion of this phenomenon.

We state first a deterministic homogenization result.
Let $f\in L^2(\R^d)$ and $\s\in(0,\infty)$ be given.
Denote by $u$ the solution to the
\index{Cauchy, A. L.!Cauchy problem}Cauchy problem for $\cL$ in $\R^d$ with
initial data $f(./\s)$
and set $u^{(\s)}(t,x)=u(\s^2t,\s x)$. Then
\begin{equation}\label{UUB}
\int_{\R^d}|u^{(\s)}(t,x)-\bar u(t,x)|^2\,dx\to0
\end{equation}
as $\s\to\infty$, for all $t\ge0$, where $\bar u$ is the solution to the Cauchy problem for $\tfrac12\divv\bar a\nabla$ in $\R^d$
with initial data $f$.

In probabilistic terms, we may fix $x\in\R^d$ and $\s\in(0,\infty)$ and
consider the
\index{diffusion!ldiffusion@$\cal L$-diffusion|)}$\cL$-diffusion process
$X$ starting from $\s x$. Set $X^{(\s)}_t=\s^{-1}X_{\s^2t}$. Then it is known 
\index{Papanicolau, G. C.|)}\index{Varadhan, S. R. S.}\cite{MR0712714} that
\begin{equation}\label{XXB}
X^{(\s)}\to\bar X,\q\text{weakly on $C([0,\infty),\R^d)$}
\end{equation}\undex{weak convergence}
where $\bar X$ is a Brownian motion in $\R^d$ with
\index{diffusivity|)}diffusivity $\bar a$ starting from $x$.
The two
\index{homogenization|)}homogenization statements are essentially equivalent given the regularity implicit
in the above qualitative estimates, the
\index{Markov, A. A.!Markov property|(}Markov property, and the identity
$$
u^{(\s)}(t,x)=\E(f(X_t^{(\s)})).
$$

\section{Existence of a volume growth rate for a diffusion sausage with
periodic drift}\label{KSA}\index{diffusion!diffusion sausage|(}
Recall that the drift $v$ is $\Z^d$-periodic and divergence-free.
\begin{theorem}\label{VGR}
There exists a constant $\g=\gamma(v,K)\in(0,\infty)$ such that, for all $x$,
\[
\lim_{t\to\infty}\frac{|S_t^K|}t=\gamma,\q\text{$\PP_x$-almost surely.}
\]
\end{theorem}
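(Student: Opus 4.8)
The plan is to apply Kingman's subadditive ergodic theorem. First I would fix the diffusion $X$ starting from $x$ and, for integers $m < n$, define the quantity
\[
Z_{m,n} = \bigl| S_{(m,n]}^K(X) \bigr| = \bigl| \textstyle\bigcup_{m < t \le n}(K + X_t) \bigr|.
\]
Since $S_{(m,n]}^K(X) \sse S_{(m,k]}^K(X) \cup S_{(k,n]}^K(X)$ for $m < k < n$, we immediately get the subadditivity $Z_{m,n} \le Z_{m,k} + Z_{k,n}$. For stationarity one needs the increments of $X$ over unit-length (or integer-length) blocks to form a stationary sequence in an appropriate sense: here is where the $\Z^d$-periodicity of $v$ enters. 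The natural move is to pass to the process $\bar X_t = X_t \bmod \Z^d$ on the torus $\T^d$; because $v$ is $\Z^d$-periodic this is a Markov process on $\T^d$, and the family $(Z_{m,n})$ is then a stationary subadditive family with respect to the shift on the path of $\bar X$, provided $\bar X$ is started from its invariant distribution. Since $v$ is divergence-free, Lebesgue measure (normalized) on $\T^d$ is invariant for $\bar X$, and it is the unique invariant measure by the uniform ellipticity / irreducibility implicit in the Gaussian bounds (\ref{AE}).

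**Integrability and the constant.**

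To invoke Kingman I must check $\E_x(Z_{0,1}) < \infty$. This follows from the Gaussian tail estimate (\ref{GTE}): $S_{(0,1]}^K(X)$ is contained in the $R$-neighbourhood of $K$ where $R = \sup_{s \le 1}|X_s - x|$, so $|S_{(0,1]}^K(X)| \le C_K(1 + R)^d$, and (\ref{GTE}) gives Gaussian tails for $R$, hence all moments. Kingman's theorem then yields a limit $\lim_{t\to\infty} Z_{0,t}/t = \g$ that is a.s.\ constant by ergodicity of the torus-valued stationary process (again from uniqueness of the invariant measure), and $\g = \inf_n \E(Z_{0,n})/n \in [0,\infty)$. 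The limit along integer times extends to all real $t$ because $Z_{0,t}$ is monotone in $t$ and $Z_{0,n+1} - Z_{0,n}$ has a uniform moment bound. Finally, the value of $\g$ does not depend on the starting point $x$: starting from $x$ versus $x'$, the two sausages can be compared after a short time once the two diffusions are coupled, or more simply one notes the limit is unchanged by deleting any initial finite time interval, and the law of $\bar X$ from any point converges to the invariant law.

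**Positivity.**

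The remaining point, and the one I expect to be the main obstacle, is the strict positivity $\g > 0$. Subadditivity only gives $\g \ge 0$, and a priori the sausage could have sublinear volume growth if the drift conspired to keep $X$ confined — but it cannot, since $v$ is bounded and $X$ genuinely diffuses. Concretely, I would argue that $\E_x|S_{(0,1]}^K| \ge c > 0$ uniformly: the lower Gaussian bound in (\ref{AE}), or equivalently a standard support/non-degeneracy argument for $X$, shows there is probability bounded below that $X$ over the time interval $(0,1]$ sweeps out a region of volume at least some fixed $c_0 > 0$ (e.g.\ $X_1 - X_0$ has a density bounded below on a ball, forcing $|S_{(0,1]}^K| \ge |K| > 0$ on that event, and in fact one gets growth by looking at displacement). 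Then $\g = \inf_n \E(Z_{0,n})/n$; to see this infimum is positive rather than merely the per-step bound failing to accumulate, I would instead show directly that $\E_x(Z_{0,n}) \ge c n$ for a fixed $c>0$ and all $n$. This can be done by a renewal-type / blocking argument: partition $(0,n]$ into unit blocks, and use that with positive probability (bounded below, by the Markov property on $\T^d$ and (\ref{AE})) the diffusion in block $j$ moves its torus-position — hence the new translate $K + X_t$ — into a part of space not yet covered by the sausage up to time $j-1$, contributing a fixed extra volume. Summing these independent-enough contributions gives the linear lower bound and hence $\g > 0$. An alternative, cleaner route to positivity: compare $|S_t^K|$ from below with the volume of the Wiener sausage for the underlying Brownian motion $W$ via a Girsanov / absolute-continuity argument on bounded time windows, and invoke the classical fact (Spitzer, Kesten--Spitzer--Whitman) that in $d \ge 3$ the Brownian Wiener sausage has positive linear growth rate; the drift is bounded so the Radon--Nikodym derivative is controlled on each unit window, which transfers the lower bound.
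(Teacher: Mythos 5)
Your construction of the limit follows the paper's route almost exactly: project to the torus, use that the divergence-free periodic drift makes normalized Lebesgue measure the unique invariant law, start $\pi(X_0)$ uniform so that $V_{m,n}=|S^K_{(m,n]}|$ is a stationary subadditive array, check integrability from (\ref{GTE}), and apply Kingman; the extension from integer to real times by monotonicity is also as in the paper. One imprecision in the "for all $x$" step: convergence of the law of $\pi(X)$ to the invariant law does \emph{not} transfer an almost-sure statement from the stationary start to a fixed start. What is actually needed (and what the paper does) is: (i) the stationary-start result gives $g(y):=\PP_y(\lim_n|S^K_n|/n=\g)=1$ for Lebesgue-a.e.\ $y$ by disintegration; (ii) the limit is unaffected by deleting the time interval $(0,1]$; (iii) by the Markov property $g(x)=\int p(1,x,y)g(y)\,dy$, and since $X_1$ has an absolutely continuous law this equals $1$ for \emph{every} $x$. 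You gesture at (ii) and at coupling, so this is fixable, but as written the justification is not correct.

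The genuine gap is positivity of $\g$, which you rightly identify as the main obstacle but do not close. Your blocking argument asserts that in each unit block the sausage, with uniformly positive probability, enters "a part of space not yet covered"; but the past sausage $S^K_{(0,j-1]}$ has volume of order $j$ and nothing in the argument prevents it from already covering the entire region reachable during block $j$ (the torus position of $X$ carries no information about this). Controlling that overlap is exactly where transience, $d\ge3$, must enter, and your sketch never uses it. The Girsanov alternative has the same defect in a different guise: on unit windows it yields $\E|S^K_{(j,j+1]}|\ge c$, but $|S^K_{(0,n]}|\le\sum_j|S^K_{(j,j+1]}|$, so per-window lower bounds do not sum to a lower bound on the union; and a single Girsanov change of measure over $[0,n]$ has a Radon--Nikodym derivative whose interplay with $|S^K_n(W)|$ is uncontrolled as $n\to\infty$ (the density tends to $0$ a.s.), so Fatou gives nothing. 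The paper's resolution is the inclusion--exclusion lower bound
\[
|S^K_{(0,nT]}|\ge\sum_{j=1}^n|S^K_{I(j)}|-\sum_{j<k}|S^K_{I(j)}\cap S^K_{I(k)}|,
\]
combined with the decay $\PP(S^K_{I(j)}\cap S^K_{I(k)}\ne\es)\le C(k-j-1)^{-d/2}$ of Lemma \ref{INT} (summable precisely because $d\ge3$) and a large block length $T$; this is carried out in Theorem \ref{AGRT}, to which the proof of positivity is deferred. Without an argument of this type your proof establishes only $\g\in[0,\infty)$.
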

\begin{proof}
Write $\pi$ for the \Index{projection} $\R^d\to\T^d$. Since $v$ is periodic, the projected process $\pi(X)$ is a diffusion on $\T^d$. 
As $v$ is divergence-free, the unique invariant distribution for $\pi(X)$ on $\T^d$ is the uniform distribution. 
The lower bound in (\ref{AE}) shows that the transition density of $\pi(X_1)$ on $\T$ is uniformly positive. 
By a standard argument $\pi(X)$ is therefore uniformly and
\index{ergodicity!geometric ergodicity}geometrically ergodic. 
Consider the case where $X_0$ is chosen randomly, and independently of $W$, such that $\pi(X_0)$ is uniformly distributed 
on $\T^d$. Then $\pi(X)$ is stationary.
For integers $0\le m<n$, define $V_{m,n}=|S^K_{(m,n]}|$. Then $V_{l,n}\le V_{l,m}+V_{m,n}$ whenever $0\le l<m<n$. Since Lebesgue measure
is translation invariant and $\pi(X)$ is stationary, the distribution of the array $(V_{m+k,n+k}:0\le m<n)$ is the same for all $k\ge0$.
Moreover $V_{m,n}$ is integrable for all $m$, $n$ by standard diffusion estimates. Hence by the
\index{Kingman, J. F. C.!Kingman subadditive ergodic theorem}subadditive
ergodic theorem \cite{MR0254907} we can conclude that, for some constant $\g\ge0$,
\[
\lim_{n\to \infty}\frac{|S^K_n|}n=\gamma,\q\text{almost surely.}
\]
The positivity of $\g$ follows from the positivity of $\capp(K)$ using Theorem \ref{AGRT} below.

Let $\PP_x$ be the probability measure on $C([0,\infty),\R^d)$ which is the law of the process $X$ starting from $x$. 
Set 
$$
g(x)=\PP_x\left(\lim_{n\to \infty}\frac{|S^K_n|}n=\gamma\right),\q
\tilde g(x)=\PP_x\left(\lim_{n\to \infty}\frac{|S^K_{(1,n]}|}n=\gamma\right).
$$
Then $g$ is periodic and $\tilde g=g$. We have shown that 
\[
\int_{x\in[0,1]^d}g(x)\,dx=1.
\]
Hence $g(x)=1$ for Lebesgue-almost-all $x$. But then by the
Markov property, for every $x$,
\[
g(x)=\tilde g(x)=\int_{\R^d}p(1,x,y)g(y)\,dy=1
\]
which is the desired
\undex{almost sure convergence@almost-sure convergence}almost-sure convergence
for discrete parameter $n$. An obvious monotonicity
argument extends this to the continuous parameter $t$.
\end{proof}

\section{Estimates for the diffusion sausage}\label{EDS}
We prepare some estimates on the diffusion sausage which will be needed later. These are
of a type well known for 
Brownian motion
\index{Le Gall, J.-F.}\cite{MR866344} and extend in a straightforward way using
the qualitative Gaussian bounds (\ref{AE}) and (\ref{GTE}).

\begin{lemma}\label{LPB}
For all $p\in[1,\infty)$ there is a constant $C(p,v,K)<\infty$ such that, for all $t\ge0$
and all $x\in\R^d$,
$$
\E_x\left(|S_t^K|^p\right)^{1/p}\le C(t+1).
$$
\end{lemma}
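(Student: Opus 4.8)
The plan is to bound $|S_t^K|$ by comparing the diffusion sausage with a ``tube'' built from the running maximum of the path, and then to control moments of that maximum using the Gaussian tail estimate (\ref{GTE}). First I would fix $R<\infty$ so large that $K$ is contained in the ball $B(0,R)$. Partition $[0,t]$ into $\lceil t\rceil$ unit subintervals $I_j=(j-1,j]\wedge(0,t]$. On each $I_j$ the path $X$ travels a random amount $M_j=\sup_{s\in I_j}|X_s-X_{(j-1)\vee 0}|$, so the portion $S_{I_j}^K(X)$ of the sausage contributed by $I_j$ is contained in the ball centred at $X_{(j-1)\vee 0}$ of radius $M_j+R$; hence $|S_{I_j}^K|\le c_d (M_j+R)^d$ for a dimensional constant $c_d$. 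Summing, $|S_t^K|\le c_d\sum_{j=1}^{\lceil t\rceil}(M_j+R)^d$.

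Next I would take $L^p$ norms. By the triangle inequality in $L^p(\PP_x)$,
\[
\E_x(|S_t^K|^p)^{1/p}\le c_d\sum_{j=1}^{\lceil t\rceil}\E_x\big((M_j+R)^{dp}\big)^{1/p}.
\]
By the Markov property at time $(j-1)\vee 0$ and spatial homogeneity in the sense that the bound in (\ref{GTE}) is uniform in the starting point, each $M_j$ is stochastically dominated (in the sense of tail bounds) by $\sup_{s\le 1}|X_s-X_0|$ under $\PP_y$, uniformly in $y$. The estimate (\ref{GTE}) gives $\PP_y(\sup_{s\le 1}|X_s-y|>\d)\le Ce^{-\d^2/C}$, so $\sup_{s\le1}|X_s-y|$ has a Gaussian-type tail with constants independent of $y$; therefore $\E_y((\sup_{s\le1}|X_s-y|+R)^{dp})\le C(p,v,K,d)=:C'$ is a finite constant not depending on $y$ or $j$. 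Substituting, $\E_x(|S_t^K|^p)^{1/p}\le c_d\,(C')^{1/p}\,\lceil t\rceil\le C(p,v,K)(t+1)$, which is the claimed bound.

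The only slightly delicate point is the uniform-in-starting-point control of the one-step supremum: one must check that the constant $C$ in (\ref{GTE}) can indeed be taken independent of $x$, which the excerpt asserts (it depends only on the Lipschitz constant of $v$), and then convert the tail bound into a moment bound via $\E(Z^{q})=q\int_0^\infty \d^{q-1}\PP(Z>\d)\,d\d$ for $Z\ge0$. Beyond that the argument is entirely routine: covering the sausage piece by a ball whose radius is the path excursion plus $\rad(K)$, Minkowski's inequality in $L^p$, and summation over the $O(t+1)$ unit blocks. I do not expect any real obstacle; the main thing to be careful about is keeping the dependence of constants transparent so that the final constant genuinely depends only on $p$, $v$ and $K$.
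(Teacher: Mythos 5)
Your proof is correct and follows essentially the same route as the paper's: decompose $[0,t]$ into unit intervals (subadditivity of volume plus Minkowski's inequality plus the Markov property), bound each unit-time piece of the sausage by a ball of radius $\rad(K)+\sup_{s\le1}|X_s-X_0|$, and control the resulting moments uniformly in the starting point via the Gaussian tail estimate (\ref{GTE}). The paper states this in two lines; you have merely written out the details.
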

\begin{proof} 
Reduce to the case $t=1$ by \Index{subadditivity} of volume and
\index{lpnorm@$L^p$ norm}$L^p$-norms and by the
Markov property. The estimate
then follows from (\ref{GTE}) since
$$
|S^K_1|\le \o_d\left(\rad(K)+\sup_{t\le 1}|X_t-x|\right)^d.
$$
\end{proof}

\begin{lemma}\label{INT}
There is a constant $C(v,K)<\infty$ with the following property.
Let $X$ and $Y$ be independent
\index{diffusion!ldiffusion@$\cal L$-diffusion}$\cL$-diffusions starting
from $x$.
For all $t\ge1$ and all $x\in\R^d$, for all $a,b\ge0$,
\begin{equation}\label{XYI}
\PP_x\left(S_{(at,(a+1)t]}^K(X)\cap S_{(bt,(b+1)t]}^K(Y)\not=\es\right)\le C(a+b)^{-d/2}
\end{equation}
and, when $b\ge a+1$,
\begin{equation}\label{XXI}
\PP_x\left(S_{(at,(a+1)t]}^K(X)\cap S_{(bt,(b+1)t]}^K(X)\not=\es\right)\le C(b-a-1)^{-d/2}.
\end{equation}
\end{lemma}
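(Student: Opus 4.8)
The plan is to adapt Le~Gall's \cite{MR866344} intersection estimates for the Wiener sausage, using the qualitative bounds (\ref{AE}) and (\ref{GTE}) in place of the explicit Gaussian computations; throughout, constants depend only on $d$, $v$ and $K$.

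First I would record the basic geometric reduction: for any path $\g$, any interval $I$ and any $t_0\in\bar I$,
\[
S_I^K(\g)\sse B\big(\g_{t_0},\ \rad(K)+\sup_{s\in\bar I}|\g_s-\g_{t_0}|\big).
\]
Applying this with $t_0=at$ for $X$ and $t_0=bt$ for $Y$, the event in (\ref{XYI}) forces $|X_{at}-Y_{bt}|\le R+M_X+M_Y$, where $R=2\rad(K)$ and $M_X=\sup_{at\le s\le(a+1)t}|X_s-X_{at}|$, $M_Y=\sup_{bt\le u\le(b+1)t}|Y_u-Y_{bt}|$; applying it with the right endpoint $\t=(a+1)t$ of the first window and the left endpoint $\t'=bt$ of the second (so $\t'-\t=(b-a-1)t$), the event in (\ref{XXI}) forces $|X_\t-X_{\t'}|\le R+M_1+M_2$ with $M_1,M_2$ the analogous window fluctuations.

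For (\ref{XYI}) I would assume $a\le b$ (the left side is symmetric in $a,b$ as $X,Y$ are i.i.d.) and $a+b\ge1$ (otherwise the claimed bound, with $C\ge1$, is at least $1$). Conditioning on $(X_{at},Y_{bt})$ and using the Markov property with (\ref{GTE}) gives $\PP(M_X+M_Y\ge r\mid X_{at},Y_{bt})\le Ce^{-r^2/Ct}$ uniformly, while by (\ref{AE}) the density of $Y_{bt}$ is at most $C(bt)^{-d/2}$; hence
\[
\PP\big(S^K_{(at,(a+1)t]}(X)\cap S^K_{(bt,(b+1)t]}(Y)\neq\es\big)\ \le\ C(bt)^{-d/2}\int_{\R^d}e^{-(|\zeta|-R)_+^2/Ct}\,d\zeta.
\]
For $t\ge1$, splitting the integral at $|\zeta|=R$ shows it is at most $C't^{d/2}$, the region $|\zeta|>R$ contributing a Gaussian integral of order $t^{d/2}$ and the region $|\zeta|\le R$ a fixed constant; so the $t^{d/2}$ cancels the heat-kernel normalisation and the bound becomes $Cb^{-d/2}\le C(a+b)^{-d/2}$ by $b\ge(a+b)/2$, which is (\ref{XYI}). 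For (\ref{XXI}) I would run the same computation with two successive conditionings, assuming $b-a-1\ge1$ (the claim being trivial otherwise): conditioning on $\cF_{\t'}$ controls $M_2$ through (\ref{GTE}); conditioning on $\cF_\t$ bounds the transition density from $\t$ to $\t'$ by $C((b-a-1)t)^{-d/2}$ through (\ref{AE}); and the residual integral $\int_{\R^d}e^{-(|\zeta|-R-M_1)_+^2/Ct}\,d\zeta$, now carrying the $\cF_\t$-measurable radius $R+M_1$, is dominated by $C'(t^{d/2}+M_1^{d}+M_1^{d-1}t^{1/2})$ and then averaged using $\E M_1^{d}\lesssim t^{d/2}$, $\E M_1^{d-1}\lesssim t^{(d-1)/2}$ (again from (\ref{GTE})); this gives $C(b-a-1)^{-d/2}$.

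The point needing care is the $t$-independence. Over a window of length $t$ the diffusion spreads over a ball of radius $\sim\sqrt t$, so a crude union bound over times in the window would cost a power of $t$; the estimates are $t$-free precisely because integrating the heat kernel over such a ball contributes a factor $t^{d/2}$ that cancels its $t^{-d/2}$ prefactor, and the purpose of conditioning on the window endpoints first is exactly to present (\ref{AE}) and (\ref{GTE}) in a form where this cancellation is visible. Everything else is the elementary Gaussian integral above and bookkeeping of constants.
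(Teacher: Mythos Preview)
Your proof is correct and follows essentially the same route as the paper. Both arguments reduce to the observation that non-empty intersection forces the separation $Z$ of the anchor points to be at most $2\rad(K)$ plus the two window fluctuations, and then combine the Aronson bound (\ref{AE}) on the density of $Z$ with the Gaussian tail (\ref{GTE}) on the fluctuations. The only difference is organisational: the paper splits the event into three cases ($R_a\ge|Z|/3$, $R_b\ge|Z|/3$, $2\rad(K)\ge|Z|/3$) and bounds each separately, whereas you keep everything in a single conditional integral and absorb the random radius $R+M_1$ through the moment bounds $\E M_1^k\lesssim t^{k/2}$; this is a cosmetic difference and the constants and exponents come out the same way.
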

\begin{proof}
We write the proof for the case $t=1$. The same argument applies generally. There is alternatively
a reduction to the case $t=1$ by scaling. Assume that $b\ge a+1$. 
Write $\cF_t$ for the $\s$-algebra generated by $(X_s:0\le s\le t)$ and set
$$
R_a=\sup_{a\le t\le a+1}|X_t-X_{a+1}|,\q
R_b=\sup_{b\le t\le b+1}|X_t-X_b|,\q
Z=X_b-X_{a+1}.
$$
Then, by (\ref{GTE}),
$$
\PP_x(R_b\ge|Z|/3|\cF_b)\le Ce^{-|Z|^2/9C}
$$
so, using (\ref{AE}),
$$
\PP_x(R_b\ge|Z|/3)\le C\E_x(e^{-|Z|^2/9C})\le C(b-a-1)^{d/2}.
$$
On the other hand, by (\ref{AE}) again,
$$
\PP_x(R_a\ge|Z|/3|\cF_{a+1})\le C(b-a-1)^{d/2}R_a^d
$$
so, using (\ref{GTE}),
$$
\PP_x(R_a\ge|Z|/3)\le C(b-a-1)^{d/2}.
$$
Moreover (\ref{AE}) gives also
$$
\PP_x(2\rad(K)\ge|Z|/3)\le C(b-a-1)^{d/2}.
$$
Now if $S_{(a,a+1]}^K(X)\cap S_{(b,b+1]}^K(X)\not=\es$ then either $R_a\ge|Z|/3$ or $R_b\ge|Z|/3$ or $2\rad(K)\ge|Z|/3$.
Hence the preceding estimates imply (\ref{XXI}).
The proof of (\ref{XYI}) is similar, resting on the fact that $X_a-Y_b$ has density bounded by $C(a+b)^{-d/2}$,
and is left to the reader.
\end{proof}

\begin{lemma}\label{ESL}
As $t\to\infty$, we have
$$
\sup_x\E_x\left(\left|\frac{|S_t^K|}t-\g\right|\right)\to0
$$
and 
$$
\sup_x\E_x\left(\frac{|S^K_t(X)\cap S_t^K(Y)|}t\right)\to0
$$
where $Y$ is an independent copy of $X$.
\end{lemma}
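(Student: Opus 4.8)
The plan is to derive both statements from the subadditive structure already in place, using Lemma \ref{LPB} for uniform integrability and Lemma \ref{INT} for the decay of overlaps. For the first limit, I would start from Theorem \ref{VGR}, which gives $|S_n^K|/n\to\g$ almost surely under $\PP_x$ for every $x$. Lemma \ref{LPB} with any $p>1$ shows the family $\{|S_n^K|/n:n\ge1\}$ is uniformly integrable under $\PP_x$, so $\E_x(|S_n^K|/n-\g|)\to0$ for each fixed $x$. To upgrade this to a supremum over $x$, I would exploit periodicity and the Markov property: writing $S_n^K=S_{(0,1]}^K(X)\cup S_{(1,n]}^K(X)$ and conditioning at time $1$, one has $\E_x(|S_n^K|)\le\E_x(|S_1^K|)+\int p(1,x,y)\E_y(|S_{n-1}^K|)\,dy$, and similarly a lower bound, so that $x\mapsto\E_x(|S_n^K|)$ is controlled uniformly by its values averaged against the (uniformly positive, by (\ref{AE})) heat kernel $p(1,x,\cdot)$ over the torus. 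Combined with the $L^2$ bound from Lemma \ref{LPB}, a dominated-convergence argument on $\T^d$ converts the pointwise convergence of $\int_{[0,1]^d}\E_x(|S_{n-1}^K|/n-\g|)\,dx\to0$ into $\sup_x\E_x(|S_n^K|/n-\g|)\to0$; the passage from integer $n$ to real $t$ is routine monotonicity.

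For the second limit, the key identity is the inclusion–exclusion bound
\[
|S_t^K(X)\cap S_t^K(Y)|\le\sum_{0\le a<\lceil t\rceil}\ \sum_{0\le b<\lceil t\rceil}|S_{(a,a+1]}^K(X)\cap S_{(b,b+1]}^K(Y)|,
\]
after which I would take expectations and bound each summand. Since the intersection volume is at most $|S_{(a,a+1]}^K(X)|\wedge|S_{(b,b+1]}^K(Y)|$, Cauchy–Schwarz and Lemma \ref{LPB} give a uniform bound $\E_x(|S_{(a,a+1]}^K(X)\cap S_{(b,b+1]}^K(Y)|)\le C\,\PP_x(S_{(a,a+1]}^K(X)\cap S_{(b,b+1]}^K(Y)\not=\es)^{1/2}$, and then (\ref{XYI}) of Lemma \ref{INT} bounds the probability by $C(a+b)^{-d/2}$ (with the understanding that the diagonal and near-diagonal terms $a+b=O(1)$ are handled separately by the crude bound $\E_x(|S_1^K|^2)\le C$). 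Summing, $\E_x(|S_t^K(X)\cap S_t^K(Y)|)\le C\sum_{a,b<\lceil t\rceil}(1+a+b)^{-d/4}$. For $d\ge5$ this double sum is $O(t)\cdot o(1)$—more precisely the number of pairs with $a+b=k$ is $k+1$, so the sum is $\sum_{k\le 2t}(k+1)(1+k)^{-d/4}\asymp\sum_{k\le 2t}(1+k)^{1-d/4}$, which is $o(t)$ precisely when $d>4$, and the bound is uniform in $x$, giving the claim after dividing by $t$.

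The main obstacle is the borderline dimensions $d=3,4$, where the exponent $d/4$ from the Cauchy–Schwarz step is too weak: $\sum_{k\le 2t}(1+k)^{1-d/4}$ grows like $t^{2-d/4}$, which is not $o(t)$. To cover these cases I would avoid squaring the probability: instead of Cauchy–Schwarz, bound the intersection volume directly via a capacity/last-exit-time argument in the spirit of Le Gall \cite{MR866344}, showing $\E_x(|S_{(a,a+1]}^K(X)\cap S_{(b,b+1]}^K(Y)|)\le C(1+a+b)^{-d/2}$ with the full exponent $d/2$—the point being that once the two tube-pieces meet, the expected volume of their intersection is $O(1)$, so the expectation of the intersection volume and the probability of nonempty intersection are comparable up to constants. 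With exponent $d/2$ in hand, $\sum_{k\le 2t}(k+1)(1+k)^{-d/2}\asymp\sum_{k\le 2t}(1+k)^{1-d/2}=o(t)$ for every $d\ge3$, uniformly in $x$, completing the proof. Alternatively one may interpolate: use the $L^p$ bound of Lemma \ref{LPB} with $p$ close to $1$ and Hölder to trade a worse moment constant for a probability exponent arbitrarily close to $d/2$, which already suffices for $d\ge3$.
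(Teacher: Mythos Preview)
Your proposal is correct, and for the first assertion it matches the paper's argument: condition at time $1$, use the upper Gaussian bound (\ref{AE}) on $p(1,x,\cdot)$ to reduce the supremum over $x$ to an integral over $[0,1]^d$, and then invoke the almost-sure convergence of Theorem \ref{VGR} under the stationary initial law together with the uniform integrability supplied by Lemma \ref{LPB}.

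For the second assertion, your final ``alternative'' is in fact exactly what the paper does, and it makes the Le Gall-style detour unnecessary. One applies H\"older with conjugate exponents $p\in(3,\infty)$ and $q\in(1,3/2)$---note that you want $p$ large, not $p$ close to $1$, so that the probability factor carries exponent $1/q$ close to $1$---to obtain
\[
\E_x\bigl(|S_{(j,j+1]}^K(X)\cap S_{(k,k+1]}^K(Y)|\bigr)
\le \E_x\bigl(|S_1^K|^p\bigr)^{1/p}\,\PP_x\bigl(\cdots\not=\es\bigr)^{1/q}
\le C(j+k)^{-d/(2q)}.
\]
Summing over $0\le j,k<n$ and dividing by $n$ gives a bound of order $n^{1-d/(2q)}$, which tends to $0$ for every $d\ge3$ once $q<3/2$. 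Your Cauchy--Schwarz attempt is just the special case $q=2$, and the fix is simply to move $q$ below $3/2$ rather than to import a capacity argument. The last-exit approach you sketch would recover the full exponent $d/2$ and is a legitimate alternative, but it buys nothing extra for the present lemma.
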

\begin{proof}
Note that 
$$
|S_{(1,t+1]}^K|\le|S_{t+1}^K|\le |S_1^K|+|S_{(1,t+1]}^K|.
$$
Given Lemma \ref{LPB}, the first assertion will follow if we can show that, as $t\to\infty$,
$$
\sup_x\E_x\left(\left|\frac{|S_{(1,t+1]}^K|}t-\g\right|\right)\to0.
$$
But by the
\index{Markov, A. A.!Markov property|)}Markov property and using (\ref{AE}),
\begin{align*}
&\E_x\left(\left|\frac{|S_{(1,t+1]}^K|}t-\g\right|\right)
=\int_{\R^d}p(1,x,y)\E_y\left(\left|\frac{|S_t^K|}t-\g\right|\right)\,dy\\
&\q\q\le C\int_{[0,1]^d}\E_y\left(\left|\frac{|S_t^K|}t-\g\right|\right)\,dy\to0
\end{align*}
as $t\to\infty$, where we used the
\undex{almost sure convergence@almost-sure convergence}almost-sure
convergence $|S_t^K|/t\to\g$ when $\pi(X_0)$ is uniform, together with
\Index{uniform integrability} from Lemma \ref{LPB}
to get the final limit.

For the second assertion,
choose $q\in(1,3/2)$ and $p\in(3,\infty)$ with $1/p+1/q=1$. Then, for $j,k\ge0$, by Lemmas \ref{LPB}
and \ref{INT}, there is a constant $C(p,v,K)<\infty$ such that
\begin{align*}
&\E_x(|S_{(j,j+1]}^K(X)\cap S_{(k,k+1]}^K(Y)|)\\
&\q\q\le\E_x\left(|S_{(j,j+1]}^K(X)|1_{\{S_{(j,j+1]}^K(X)\cap S_{(k,k+1]}^K(Y)\not=\es\}}\right)\\
&\q\q\le\E_x\left(|S_1^K(X)|^p\right)^{1/p}\PP_x\left(S_{(j,j+1]}^K(X)\cap S_{(k,k+1]}^K(Y)\not=\es\right)^{1/q}\\
&\q\q\le C(j+k)^{-d/2q}.
\end{align*}
So, as $n\to\infty$, we have
\begin{align*}
&\E_x\left(\frac{|S^K_n(X)\cap S_n^K(Y)|}n\right)\\
&\q\q\le\frac{\E_x(|S_1^K(X)|)}n
           +\sum_{j=1}^{n-1}\sum_{k=0}^{n-1}\frac{\E_x(|S_{(j,j+1]}^K(X)\cap S_{(k,k+1]}^K(Y)|)}n\\
&\q\q\le Cn^{-1}+Cn^{-d/(2q)+1}\to0.
\end{align*}
\end{proof}

\section{Asymptotics of the growth rate for small and large cross-sections}\label{AGR}
We investigate the behaviour of the asymptotic growth rate $\g(v,rK)$ of the volume 
of the $(X,rK)$-sausage $S^{rK}_t$ in the limits $r\to0$ and $r\to\infty$.
Recall the
\Index{stochastic differential equation} (\ref{SDE}) for $X$ and recall the
rescaled process $X^{(\s)}$
from Section \ref{HPD}.
Set $W^{(\s)}_t=\s^{-1}W_{\s^2t}$. Then $W^{(\s)}$ is also a 
Brownian motion and $X^{(\s)}$
satisfies the stochastic differential equation
\beq
dX^{(\s)}_t=dW^{(\s)}_t+v^{(\s)}(X^{(\s)}_t)\,dt
\label{sde1}
\eeq
where $v^{(\s)}(x)=\s v(\s x)$.
This makes it clear that $X^{(\s)}\to W$ as $\s\to0$ weakly on $C([0,\infty),\R^d)$. Recall from Section \ref{HPD} the fact that 
$X^{(\s)}\to\bar X$ as $\s\to\infty$, in the same sense, where $\bar X$ is a Brownian motion with \index{diffusivity|(}diffusivity $\bar a$.

Take $\s=r$. Then 
$$
S^{rK}_t(X)=rS_{r^{-2}t}^K(X^{(r)})
$$
so
$$
|S^{rK}_t(X)|=r^d |S_{r^{-2}t}^K(X^{(r)})|.
$$
Hence the limit 
$$
\g(v^{(r)},K):=\lim_{t\to\infty}|S_t^K(X^{(r)})|/t
$$
exists and equals $r^{2-d}\g(v,rK)$. 
The weak limits for $X^{(r)}$ as $r\to0$ or $r\to\infty$ suggest the following result,
which however requires further argument
because the asymptotic growth rate of the sausage is not a continuous function on $C([0,\infty),\R^d)$.
Write $\capp(K)$ for the
\index{Newton, I.!Newtonian capacity}Newtonian capacity of $K$ and
$\capp_{\bar a}(K)$ for the \Index{capacity} of $K$ with respect to the
\index{diffusivity|)}diffusivity matrix $\bar a$. Thus
$$
\capp_{\bar a}(K)=\sqrt{\det\bar a}\,\capp (\bar a^{-1/2}K). 
$$

\begin{theorem}\label{AGRT}
We have
$$
\lim_{r\to 0}r^{2-d}\g(v,rK)=\lim_{r\to0}\g(v^{(r)},K)=\g(0,K)=\capp(K)
$$
and
$$
\lim_{r\to\infty}r^{2-d}\g(v,rK)=\lim_{r\to\infty}\g(v^{(r)},K)=\capp_{\bar a}(K).
$$
\end{theorem}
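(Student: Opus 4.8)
The plan is to establish both limits by combining the scaling identity $\gamma(v^{(r)},K)=r^{2-d}\gamma(v,rK)$ with the weak convergences $X^{(r)}\to W$ (as $r\to 0$) and $X^{(r)}\to\bar X$ (as $r\to\infty$) proved via the SDE \eqref{sde1}, together with the known value of the growth rate for a pure Brownian sausage. Indeed, for Brownian motion $W$ the classical result of Spitzer (and Kesten--Spitzer--Whitman) gives $|S^K_t(W)|/t\to\capp(K)$ a.s.\ and in $L^1$ in dimension $d\ge 3$; applying this to a Brownian motion with diffusivity $\bar a$, i.e.\ to $\bar a^{1/2}$ times a standard BM, and using the scaling of Newtonian capacity under linear maps, yields $|S^K_t(\bar X)|/t\to\capp_{\bar a}(K)$. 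So the two target constants are exactly $\gamma(0,K)$ and the analogous rate for $\bar X$; what must be shown is that $\gamma(v^{(r)},K)$ converges to these as $r\to 0$ and $r\to\infty$ respectively. I will treat $\gamma(v^{(r)},K)$ as $\lim_t \E_x(|S^K_t(X^{(r)})|/t)$, which is legitimate by Lemma \ref{ESL} (uniformly in the starting point, hence uniformly in $v$ as the constants there depend only on the Lipschitz bound of the drift, which is preserved under the scaling $v\mapsto v^{(\s)}$ only for $\s\le$ a fixed bound — a point I will need to handle, see below).

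The core estimate is an upper and lower semicontinuity argument for the functional $\gamma$ under weak convergence of the driving process. For a fixed large time $t$, the map $\g\mapsto |S^K_t(\g)|$ is not continuous on $C([0,\infty),\R^d)$, but it is lower semicontinuous when $K$ is the closure of its interior (any point in the open sausage of a limiting path lies in the open sausage of nearby paths), and it can be sandwiched between the sausages for slightly shrunk and slightly enlarged cross-sections in a way that becomes continuous off a null set. The strategy: fix $t$; by weak convergence and a Skorokhod coupling, pass to the limit in $\E_x(|S^K_t(X^{(r)})|/t)$ to get, up to an $\eta$-error controlled by capacity continuity $\capp((1+\eta)K)\to\capp(K)$, the quantity $\E_x(|S^K_t(\text{limit process})|/t)$; then let $t\to\infty$. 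Uniform integrability of $|S^K_t(X^{(r)})|/t$ across small $r$ (resp.\ large $r$) is needed to upgrade weak convergence of the integrands to convergence of expectations, and this comes from Lemma \ref{LPB} provided the constant there is uniform over the relevant range of $v^{(r)}$.

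The main obstacle is precisely this uniformity in $r$, especially as $r\to\infty$, because the Lipschitz constant of $v^{(r)}(x)=r v(rx)$ blows up, so the Gaussian bounds \eqref{AE}, \eqref{GTE} with constants depending on that Lipschitz constant degenerate. The resolution is to not use those process-level bounds directly but to work at the level of the homogenized limit: for $r\to\infty$ one should use the homogenization estimates of Section \ref{HPD} (the convergence $X^{(\s)}\to\bar X$ holds with control coming from the fixed field $v$, not its rescalings) to get tightness and moment bounds on $X^{(r)}$ over large-but-fixed time windows, and one can still invoke Lemma \ref{LPB} for the \emph{original} process $X$ over long times to bound the sausage of $X^{(r)}$ via $|S^{rK}_{r^2 s}(X)|=r^d|S^K_s(X^{(r)})|$, which relates the two scalings. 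For $r\to 0$ the Lipschitz constant of $v^{(r)}$ is bounded by that of $v$, so Lemmas \ref{LPB}--\ref{ESL} apply with uniform constants and that half is cleaner. A secondary technical point is handling the possibility that $K$ is not regular (not the closure of its interior): one reduces to the regular case by monotonicity, approximating $K$ from inside and outside by nice compact sets whose capacities converge to $\capp(K)$, using that $\gamma(\cdot,K)$ is monotone in $K$ and that capacity is continuous under such approximations.
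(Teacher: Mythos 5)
Your overall architecture is right: the scaling identity $\g(v^{(r)},K)=r^{2-d}\g(v,rK)$, the weak limits $X^{(r)}\to W$ and $X^{(r)}\to\bar X$, and the Spitzer/Le Gall value $\capp_{\bar a}(K)$ for the Brownian sausage are exactly the ingredients the paper uses. But there is a genuine gap at the decisive step: you compute $\lim_{t\to\infty}\lim_{r}\E\bigl(|S^K_t(X^{(r)})|/t\bigr)$, whereas the theorem asks for $\lim_{r}\lim_{t\to\infty}$. Interchanging these limits requires a bound on $\bigl|\E(|S^K_t(X^{(r)})|)/t-\g(v^{(r)},K)\bigr|$ that is \emph{uniform in $r$}, and your proposal never produces one; you correctly observe that Lemma \ref{ESL} does not supply it because the Lipschitz constant of $v^{(r)}$ blows up, but the proposed fix (``work at the level of the homogenized limit \dots get tightness and moment bounds'') is not an argument. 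The paper's mechanism for this is the real content of its proof: fix a block length $T$, set $F_j(\g)=|S^K_{(0,T]}(\g)\cap S^K_{(jT,(j+1)T]}(\g)|/T$, and use inclusion--exclusion plus stationarity of $\pi(X_0)$ to sandwich
$$
\E(F_0(X^{(r)}))-\sum_{j\ge1}\E(F_j(X^{(r)}))\;\le\;\g(v^{(r)},K)\;\le\;\E(F_0(X^{(r)})),
$$
for \emph{every fixed} $T$ and $r$. The correction term is then shown to satisfy $\E(F_j(X^{(r)}))\le Cj^{-d/(2q)}$ with $C$ independent of $r$, so its tail is uniformly small; one may then pass $r\to\infty$ (or $r\to0$) in finitely many continuous functionals $F_0,\dots,F_J$ and only afterwards send $T\to\infty$, using subadditivity again to show $\E(F_j(\bar X))\to0$ and $\E(F_0(\bar X))\to\capp_{\bar a}(K)$. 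Nothing in your write-up plays the role of this uniformly summable subadditivity defect.

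On the uniformity worry itself: it is resolvable, and in fact by the very scaling relation you mention. One does not apply Lemmas \ref{LPB} and \ref{INT} to $X^{(r)}$ with drift $v^{(r)}$; one applies them to the original process $X$ with cross-section $rK$ on time blocks of length $r^2T$, and the constants scale correctly (e.g.\ $\rad(rK)^d\,(r^2T(b-a-1))^{-d/2}=\rad(K)^dT^{-d/2}(b-a-1)^{-d/2}$, with no surviving $r$-dependence), which is why the lemmas are stated for general $t\ge1$. You stop just short of carrying this through to the bound on $\E(F_j(X^{(r)}))$ that the interchange of limits actually needs. Your secondary points (Skorokhod coupling with $(1\pm\eta)K$ to handle the discontinuity of the volume functional, and inner/outer approximation of irregular $K$) are reasonable alternatives to the paper's bare assertion that $F_{j,k}$ is continuous, but they do not repair the main gap.
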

\begin{proof}
Fix $T\in(0,\infty)$ and write $I(j)$ for the interval $((j-1)T,jT]$. Consider for $1\le j\le k$ the function $F_{j,k}$ on $C([0,\infty),\R^d)$ defined by
$$
F_{j,k}(\g)=|S_{I(j)}^{K}(\g)\cap S_{I(k)}^{K}(\g)|/T.
$$
Then $F_{j,k}$ is continuous, so 
$$
\lim_{r\to0}\E(F_{j,k}(X^{(r)}))=\E(F_{j,k}(W)),\q
\lim_{r\to\infty}\E(F_{j,k}(X^{(r)}))=\E(F_{j,k}(\bar X)).
$$
Choose $X_0$ so that $\pi(X_0)$ is uniformly distributed. Then by stationarity $\E(F_{j,k}(X^{(r)}))=\E(F_{k-j}(X^{(r)}))$
where $F_j=F_{1,j+1}$. Fix $r$ and write $S_I^K(X^{(r)})=S_I$. Note that 
$$
S_{(0,nT]}=S_{I(1)}\cup\dots\cup S_{I(n)}.
$$
So, by \Index{inclusion-exclusion}, we obtain
$$
n\E(F_0(X^{(r)}))-\sum_{j=1}^{n-1}(n-j)\E(F_j(X^{(r)}))\le\E(|S_{(0,nT]}|/T)\le n\E(F_0(X^{(r)})).
$$
Divide by $n$ and let $n\to\infty$ to obtain
$$
\E(F_0(X^{(r)}))-\sum_{j=1}^{\infty}\E(F_j(X^{(r)}))\le \g(v^{(r)},K)\le\E(F_0(X^{(r)})).
$$
Fix $q\in(1,3/2)$ and $p\in(3,\infty)$ with $p^{-1}+q^{-1}=1$.
By Lemmas \ref{LPB} and \ref{INT}, there is a constant $C(p,v,K)<\infty$ such that, for all $r$ and $j$,
\begin{align*}
&\E(F_j(X^{(r)}))
=\E\left(|S_{I(1)}\cap S_{I(j+1)}|/T\right)\\
&\q\q\le\E\left(\left||S_{I(1)}|/T\right|^p\right)^{1/p}
\PP\left(S_{I(1)}\cap S_{I(j+1)}\not=\es\right)^{1/q}
\le2C(j-1)^{-d/2q}.
\end{align*}
Given $\ve>0$, we can choose $J(p,v,K)<\infty$ so that 
\begin{equation}\label{SJK}
\sum_{j=J+1}^{\infty}\E(F_j(X^{(r)}))\le2C\sum_{j=J}^\infty j^{-d/2q}\le\ve.
\end{equation}
We follow from this point the case $r\to\infty$. The argument for the other limit is the same. Let $r\to\infty$ to obtain
\begin{align}\notag
\E(F_0(\bar X))-\sum_{j=1}^J\E(F_j(\bar X))-\ve&\le\liminf_{r\to\infty}\g(v^{(r)},K)\\ \label{EFO}
&\le\limsup_{r\to\infty}\g(v^{(r)},K)\le\E(F_0(\bar X)).
\end{align}
It is known that 
$$
\lim_{T\to\infty}\E(F_0(\bar X))=\lim_{T\to\infty}\E(|S_T^K(\bar X)|/T)=\capp_{\bar a}(K).
$$
See \cite{MR866344} for the case $\bar a=I$. The 
general case follows by a scaling transformation. Note that, for $j\ge1$,
$$
|S_{(0,T]}^K(\bar X)\cap S_{(jT,(j+1)T]}^K(\bar X)|+|S_{(0,(j+1)T]}^K(\bar X)|\le\sum_{i=1}^{j+1}|S_{((i-1)T,iT]}^K(\bar X)|.
$$
Take expectation, divide by $T$ and let $T\to\infty$ to obtain
\begin{align*}
&(j+1)\capp_{\bar a}(K)+\limsup_{T\to\infty}\E|S_{(0,T]}^K(\bar X)\cap
    S_{(jT,(j+1)T]}^K(\bar X)|/T\\
&\qquad{}\le (j+1)\capp_{\bar a}(K)
\end{align*}
which says exactly that
$$
\lim_{T\to\infty}\E(F_j(\bar X))=0.
$$
Hence the desired limit follows on letting $T\to\infty$ in (\ref{EFO}).
\end{proof}

\section{Homogenization of the advection-diffusion equation in a perforated domain}\label{HH}\index{advection-diffusion|(}\index{homogenization|(}
Our main results are analogues to the homogenization statements (\ref{UUB}), (\ref{XXB}) for advection-diffusion 
in a \index{perforated domain}perforated domain.
Recall that $v$ is a $\Z^d$-periodic,
\index{Lipschitz, R. O. S.}Lipschitz, mean-zero, divergence-free
\index{vector field}vector field on $\R^d$,
and $K$ is a compact subset of $\R^d$. 
The domain $D\sse\R^d$ is constructed by removing all the sets $K+p$, where
$p$ runs over the set $P$ of atoms of a
\index{Poisson, S. D.!Poisson random measure}Poisson random measure $\mu$ on
$\R^d$ of intensity $\rho=\s^{-2}$.
Write
$$
\bar a=\bar a(v,K),\q \bar\l=\bar\l(v,K)=\g(v,\hat K).
$$

\begin{theorem}
Let $f\in L^2(\R^d)$ and $\s\in(0,\infty)$ be given.
Denote by $u$ the solution\footnote{We extend $u$ to a function on
$[0,\infty)\times\R^d$ by setting $u(t,x)=0$ for any $x\not\in D$.} to the
\index{Cauchy, A. L.!Cauchy problem|(}Cauchy problem for 
$$
\cL=\tfrac12\Delta+v(x).\nabla
$$
in $[0,\infty)\times D$ with initial data $f(\cdot/\s)$, and with
\index{Dirichlet, J. P. G. L.!Dirichlet boundary condition}Dirichlet
conditions on the boundary of $D$.
Set $u^{(\s)}(t,x)=u(\s^2t,\s x)$. Then
\begin{equation*}
\E\int_{\R^d}|u^{(\s)}(t,x)-\bar u(t,x)|^2\,dx\to0
\end{equation*}
as $\s\to\infty$, for all $t\ge0$, where $\bar u$ is the solution to the Cauchy problem for $\frac12\divv\bar a\nabla-\bar\l$
in $[0,\infty)\times\R^d$ with initial data $f$.
\end{theorem}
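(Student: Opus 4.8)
The plan is to exploit the probabilistic representation $\E(u(t,x))=\E_x(f(X_t)\exp(-\rho|S_t^{\hat K}(X)|))$ from (\ref{EU}) together with the second-moment formula (\ref{EUS}), and to show that after the homogenizing rescaling both moments converge to the corresponding moments of $\bar u(t,x)$, namely $\E(\bar u(t,x))=\bar u(t,x)$ and $\bar u(t,x)^2$, so that $u^{(\s)}(t,\cdot)\to\bar u(t,\cdot)$ in $L^2(\R^d\times\Omega)$. First I would change variables: with $X^{(\s)}_t=\s^{-1}X_{\s^2t}$ one gets, writing $g=f$ and using $\rho=\s^{-2}$,
\[
\E\bigl(u^{(\s)}(t,x)\bigr)=\E_{\s x}\Bigl(f\bigl(X^{(\s)}_t\bigr)\exp\bigl(-\s^{-2}|S^{\hat K}_{\s^2 t}(X)|\bigr)\Bigr),
\]
and by the scaling $|S^{\hat K}_{\s^2 t}(X)|=\s^{d}|S^{\hat K}_{\,}|$-type identity used in Section \ref{AGR}, the exponent is $\s^{d-2}$ times a sausage volume for the rescaled process; since $d\ge3$ this does not obviously vanish, and the point is precisely that the sausage built from the drift-carrying process $X$ on the unrescaled clock grows linearly with rate $\g(v,\hat K)=\bar\l$, so that over the time horizon $\s^2 t$ the exponent is $\approx \s^{-2}\cdot\bar\l\s^2 t=\bar\l t$. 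Thus the key input is Theorem \ref{VGR} and, more quantitatively, the $L^1$ convergence $\sup_x\E_x(|\,|S^{\hat K}_t|/t-\bar\l|\,)\to0$ from Lemma \ref{ESL}.

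The main steps, in order, are: (i) for fixed $t$, decouple the $f$-factor from the exponential by a uniform-continuity/approximation argument — approximate $f$ in $L^2$ by bounded uniformly continuous functions, and use the Gaussian heat-kernel bounds (\ref{AE}) and the homogenization limit (\ref{XXB}) to show $\E_{\s x}(f(X^{(\s)}_t))\to\E(f(\bar X_t))=\bar P_t f(x)$, the heat semigroup for $\tfrac12\divv\bar a\nabla$, in $L^2_x$; (ii) show the exponential factor self-averages: replace $\s^{-2}|S^{\hat K}_{\s^2 t}(X)|$ by $\bar\l t$ up to an error that is small in probability, using Lemma \ref{ESL} and the Markov property to localize along the path and sum the increments over unit-time blocks (each contributing $\approx\bar\l$), controlling the discrepancy in $L^1$; (iii) combine (i) and (ii) via Cauchy–Schwarz and boundedness of $e^{-(\cdot)}\in[0,1]$ to get $\E(u^{(\s)}(t,\cdot))\to e^{-\bar\l t}\bar P_t f=\E(\bar u(t,\cdot))$ in $L^2_x$; (iv) repeat the same scheme for the second moment using (\ref{EUS}), where now the exponent involves $|S^{\hat K}_t(X)\cup S^{\hat K}_t(Y)|=|S^{\hat K}_t(X)|+|S^{\hat K}_t(Y)|-|S^{\hat K}_t(X)\cap S^{\hat K}_t(Y)|$; by the second assertion of Lemma \ref{ESL} the intersection term, divided by $t$, has vanishing expectation, so the union behaves like the sum of two independent sausages and $\E(u^{(\s)}(t,x)^2)\to e^{-2\bar\l t}(\bar P_t f(x))^2=\bar u(t,x)^2$, again in $L^2_x$; (v) conclude, since convergence of first and second moments of $u^{(\s)}(t,x)$ to a deterministic limit $\bar u(t,x)$ gives $\E\int_{\R^d}|u^{(\s)}(t,x)-\bar u(t,x)|^2\,dx=\E\int(u^{(\s)})^2-2\int \bar u\,\E(u^{(\s)})+\int\bar u^2\to0$.

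The hard part will be step (ii) — showing that the random exponent $\s^{-2}|S^{\hat K}_{\s^2t}(X)|$ concentrates at $\bar\l t$ \emph{uniformly enough in the starting point} to integrate against $f(X^{(\s)}_t)$ and survive the $L^2_x$ integration. Lemma \ref{ESL} gives the needed $\sup_x$ $L^1$ control of $|S^{\hat K}_t|/t-\bar\l$, but one must be careful that the relevant time is $\s^2 t$ (growing) while the starting point $\s x$ also moves; the Markov property at time, say, $1$ together with the heat-kernel bound (\ref{AE}) reduces the starting measure to something comparable to the uniform measure on $[0,1]^d$ (as in the proof of Lemma \ref{ESL}), after which Lemma \ref{ESL} applies with $t$ replaced by $\s^2 t-1$. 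A secondary technical nuisance is handling the boundary/regularity of $D$ so that the extension-by-zero convention is consistent and the representation $u(t,x)=\E_x(f(X_t)1_{\{T>t\}}\mid\mu)$ holds; but this is already granted by the hypotheses and the discussion preceding (\ref{EU}), so it does not enter the analytic core of the argument.
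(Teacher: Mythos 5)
Your proposal is correct and follows essentially the same route as the paper: both rest on the representations (\ref{EU})--(\ref{EUS}), the whole-space homogenization (\ref{UUB}), and the two assertions of Lemma \ref{ESL} (self-averaging of $\rho|S_{\s^2t}|$ to $\bar\l t$ and negligibility of the intersection of the two sausages), with the $x$-integration controlled by stationarity of Lebesgue measure for $X^{(\s)}$. Your expansion into first and second moments is just a rearrangement of the paper's variance decomposition $\E(|u^{(\s)}-\bar u|^2)=\operatorname{var}(u^{(\s)})+(\E(u^{(\s)})-\bar u)^2$, and the uniformity in the starting point that you flag as the hard part is already supplied by the $\sup_x$ in Lemma \ref{ESL}.
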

\begin{proof}
Replace $t$ by $\s^2t$, $x$ by $\s x$ and $f$ by $f(\cdot/\s)$ in (\ref{EU}) and (\ref{EUS}) to obtain
\begin{equation*}
\E\left(u^{(\s)}(t,x)\right)=\E_{\s x}\left(f(X_t^{(\s)})\exp\{-\rho|S_{\s^2 t}^{\hat K}(X)|\}\right)
\end{equation*}
and
\begin{equation*}
\E\left(u^{(\s)}(t,x)^2\right)=\E_{\s x}\left(f(X_t^{(\s)})f(Y_t^{(\s)})\exp\{-\rho|S_{\s^2 t}^{\hat K}(X)\cup S_{\s^2 t}^{\hat K}(Y)|\}\right)
\end{equation*}
where the subscript $\s x$ specifies the starting point of $X$ and where $Y$ is an independent copy of $X$.
We omit from now on the superscript $\hat K$.
Then\footnote{This is an instance of the formula $\E(|X-a|^2)=\var(X)+(\E(X)-a)^2$.}
\begin{align}\notag
&\E\left(|u^{(\s)}(t,x)-\bar u(t,x)|^2\right)\\\notag
&=\E_{\s x}\left(f(X_t^{(\s)})f(Y_t^{(\s)})e^{-\rho|S_{\s^2 t}(X)\cup S_{\s^2 t}(Y)|}(1-e^{-\rho|S_{\s^2 t}(X)\cap S_{\s^2 t}(Y)|})\right)\\\notag
&\q\q+\left(\E_{\s x}(f(X_t^{(\s)})\{e^{-\rho|S_t(X)|}-e^{-\bar\l t}\}\right.\\\notag
&\q\q\q\q            +\left.\left\{\E_{\s x}(f(X_t^{(\s)}))-\E_x(f(\bar X_t))\right\}e^{-\bar\l t}\right)^2\\\notag
&\le\E_{\s x}\left(f(X_t^{(\s)})^2\right)\E_{\s x}\left(\rho|S_{\s^2 t}(X)\cap S_{\s^2 t}(Y)|\right)^{1/2}\\\notag
&\q\q+2\E_{\s x}\left(f(X_t^{(\s)})^2\right)\E_{\s x}\left(\rho|S_{\s^2 t}(X)|-\bar\l t|\right)\\\label{EUU}
&\q\q\q\q+2|u_0^{(\s)}(t,x)-\bar u_0(t,x)|^2
\end{align}
where $u_0^{(\s)}$ and $\bar u_0$ denote the corresponding solutions to the
\index{Cauchy, A. L.!Cauchy problem|)}Cauchy problem with initial data $f$ in
the whole space,
and we used Cauchy--Schwarz and $(a+b)^2\le2a^2+2b^2$ and $|e^{-a}-e^{-b}|^2\le|b-a|$ to obtain the inequality.
Now
$$
\int_{\R^d}\E_{\s x}\left(f(X_t^{(\s)})^2\right)\,dx=\int_{\R^d}|f(x)|^2\,dx<\infty
$$
because $dx$ is stationary for $X^{(\s)}$ and, by (\ref{UUB}), as $\s\to\infty$
$$
\int_{\R^d}|u_0^{(\s)}(t,x)-\bar u_0(t,x)|^2\,dx\to0.
$$
So, using Lemma \ref{ESL}, on integrating (\ref{EUU}) over $\R^d$ and letting
$\s\to\infty$, we conclude that the right-hand side tends to $0$,
proving the theorem.
\end{proof}

\begin{theorem}
Let $x\in\R^d$ and $\s\in(0,\infty)$ be given.
Let $X$ be an
\index{diffusion!ldiffusion@$\cal L$-diffusion|(}$\cL$-diffusion in $\R^d$
starting from $\s x$ and set
$$
T=\inf\{t\ge0:X_t\in K+P\}.
$$
Set $X^{(\s)}_t=\s^{-1}X_{\s^2t}$ and $T^{(\s)}=\s^{-2}T$.
Write $\bar X$ for a 
Brownian motion in $\R^d$ with
\index{diffusivity|(}diffusivity $\bar a$ starting from $x$,
and write $\bar T$ for an exponential random variable of parameter $\bar\l$, independent of $\bar X$.
Then, as $\s\to\infty$,
$$
(X^{(\s)},T^{(\s)})\to(\bar X,\bar T),\q\text{weakly on $C([0,\infty),\R^d)\times[0,\infty)$.}
$$
\end{theorem}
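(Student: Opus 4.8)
The approach is to follow the method used for the preceding theorem: condition on $\mu$, use the diffusion-sausage formula to rewrite the event $\{T^{(\s)}>t\}$, and then reduce the problem to the known path convergence $X^{(\s)}\to\bar X$ of (\ref{XXB}) together with the uniform law of large numbers of Lemma \ref{ESL}. The one real idea is that, after this rewriting, the hitting-time factor becomes $\exp\{-\s^{-2}|S^{\hat K}_{\s^2t}(X)|\}$, and by Lemma \ref{ESL} this concentrates \emph{uniformly in the starting point} on the deterministic constant $e^{-\bar\l t}$, which is exactly what lets it be decoupled asymptotically from a continuous functional of the path.

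Concretely, since $\mu$ is Poisson of intensity $\rho=\s^{-2}$ and $T>t$ iff $\mu(S^{\hat K}_t(X))=0$, conditioning on $X$ gives $\PP(T>t\mid X)=\exp\{-\s^{-2}|S^{\hat K}_t(X)|\}$; replacing $t$ by $\s^2t$ and writing $T^{(\s)}=\s^{-2}T$ yields, for bounded measurable $\Phi$ on $C([0,\infty),\R^d)$,
\[
\E\left(\Phi(X^{(\s)})1_{\{T^{(\s)}>t\}}\right)=\E_{\s x}\left(\Phi(X^{(\s)})\exp\{-\s^{-2}|S^{\hat K}_{\s^2t}(X)|\}\right),
\]
the analogue of (\ref{EU}). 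For bounded continuous $\Phi$ I would write the right side as $\E_{\s x}(\Phi(X^{(\s)}))e^{-\bar\l t}+\E_{\s x}(\Phi(X^{(\s)})(e^{-\s^{-2}|S^{\hat K}_{\s^2t}(X)|}-e^{-\bar\l t}))$; the first term converges to $\E(\Phi(\bar X))e^{-\bar\l t}$ by (\ref{XXB}), and the second, using $|e^{-a}-e^{-b}|\le|a-b|$ and $\bar\l=\g(v,\hat K)$, is bounded in absolute value by
\[
\|\Phi\|_\infty\,t\,\sup_y\E_y\left(\left|\frac{|S^{\hat K}_{\s^2t}|}{\s^2t}-\g(v,\hat K)\right|\right)\longrightarrow0
\]
as $\s\to\infty$ by Lemma \ref{ESL}. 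Hence $\E(\Phi(X^{(\s)})1_{\{T^{(\s)}>t\}})\to\E(\Phi(\bar X))e^{-\bar\l t}=\E(\Phi(\bar X)1_{\{\bar T>t\}})$ for every bounded continuous $\Phi$ and every $t\ge0$.

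To pass from this to genuine joint weak convergence on $C([0,\infty),\R^d)\times[0,\infty)$: taking $\Phi\equiv1$ gives $\PP(T^{(\s)}>t)\to e^{-\bar\l t}$, hence tightness of $\{T^{(\s)}:\s\ge1\}$, and with the tightness of $\{X^{(\s)}\}$ the pairs are tight. For a subsequential limit $(Z,S)$, the Portmanteau theorem applied to the lower-semicontinuous map $\Phi(\cdot)1_{(t,\infty)}(\cdot)$ and the upper-semicontinuous map $\Phi(\cdot)1_{[t,\infty)}(\cdot)$ (bounded continuous $\Phi\ge0$) gives $\E(\Phi(Z)1_{\{S>t\}})\le\E(\Phi(\bar X))e^{-\bar\l t}\le\E(\Phi(Z)1_{\{S\ge t\}})$; with $\Phi\equiv1$ this forces $\PP(S\ge t)=\PP(S>t)=e^{-\bar\l t}$, so $S\sim\mathrm{Exp}(\bar\l)$ has no atoms and the two bounds collapse to $\E(\Phi(Z)1_{\{S>t\}})=\E(\Phi(\bar X))\PP(\bar T>t)$. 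A monotone-class argument (the events $\{S>t\}$ form a $\pi$-system generating the Borel sets of $[0,\infty)$, and $\Phi\mapsto\Phi+\|\Phi\|_\infty$ removes the sign restriction) upgrades this to $\E(\Phi(Z)h(S))=\E(\Phi(\bar X))\E(h(\bar T))=\E(\Phi(\bar X)h(\bar T))$ for bounded continuous $\Phi$ and bounded measurable $h$, using $\bar X\perp\bar T$; this identifies the law of $(Z,S)$ with that of $(\bar X,\bar T)$, and since every subsequential limit is the same, the full family converges.

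The main obstacle is that $T^{(\s)}$ is not a continuous functional of $X^{(\s)}$ (as already noted for the growth rate in Section \ref{AGR}), so one cannot simply invoke the continuous mapping theorem on top of $X^{(\s)}\to\bar X$; the sausage formula together with the \emph{uniform} convergence in Lemma \ref{ESL} is what resolves this. The remaining work is routine: the estimate in the second paragraph, and the soft-analysis bookkeeping in the third (handling the discontinuity of the indicator via Portmanteau and checking that $\bar T$ has no atoms so that the upper and lower bounds meet).
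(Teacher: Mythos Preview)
Your proof is correct and follows essentially the same route as the paper: derive the sausage formula $\E(\Phi(X^{(\s)})1_{\{T^{(\s)}>t\}})=\E_{\s x}(\Phi(X^{(\s)})e^{-\rho|S^{\hat K}_{\s^2t}|})$, split off the factor $e^{-\bar\l t}$, and control the two pieces with (\ref{XXB}) and Lemma \ref{ESL} respectively. The only difference is that the paper stops once it has shown $\E(F(X^{(\s)})1_{\{T^{(\s)}>t\}})\to\E(F(\bar X)1_{\{\bar T>t\}})$ for all bounded continuous $F$ and all $t>0$, declaring this to prove the theorem, whereas you add a careful third paragraph (tightness, Portmanteau on the semi-continuous indicators, atomlessness of the exponential, monotone class) to upgrade this to full weak convergence on the product space; this extra step is routine, but it does make explicit something the paper leaves implicit.
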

\begin{proof}
Write $S_t$ for the $(X,\hat K)$-sausage.
Fix a bounded continuous function $F$ on $C([0,\infty),\R^d)$ and fix $t>0$. Then
$$
\E\left(F(X^{(\s)})1_{\{T^{(\s)}>t\}}\right)=\E\left(F(X^{(\s)})\exp\{-\rho|S_{\s^2t}|\}\right)
$$
and 
$$
\E\left(F(\bar X)1_{\{\bar T>t\}}\right)=\E\left(F(\bar X)e^{-\bar\l t}\right)
$$
so
\begin{align*}
&\left|\E\left(F(X^{(\s)})1_{\{T^{(\s)}>t\}}\right)-\E\left(F(\bar X)1_{\{\bar T>t\}}\right)\right|\\
&\q\le\|F\|_\infty\,\E_{\s x}|\rho|S_{\s^2t}|-\bar\l t|+|\E(F(X^{(\s)})-\E(F(\bar X))|e^{-\bar\l t}.
\end{align*}
On letting $\s\to\infty$, the first term on the right tends to $0$ by Lemma \ref{ESL} and the second term tends to $0$ by (\ref{XXB}),
so the left hand side also tends to $0$, proving the theorem.
\end{proof}\index{advection-diffusion|)}\index{homogenization|)}\index{diffusion!diffusion sausage|)}

\section{The case of diffusivity $\ve^2I$}\label{DVE}
In this section and the next we fix $\ve\in(0,\infty)$ and consider the more general case of the \Index{operator}
$$
\cL=\tfrac12\ve^2\D+v(x).\nabla.
$$
The following statements follow from the corresponding statements ab\-ove for the case $\ve=1$ by scaling.
Fix $x\in\R^d$ and let $X$ be an
\index{diffusion!ldiffusion@$\cal L$-diffusion|)}$\cL$-diffusion in $\R^d$ starting from $x$. Then
$$
|S_t^K(X)|/t\to\g(\ve,v,K),\q\PP_x\text{-almost surely}
$$
as $t\to\infty$, where
$$
\g(\ve,v,K)=\ve^2\g(\ve^{-2}v,K).
$$
Moreover, setting $v^{(r)}(x)=rv(rx)$, as above, we have
$$
\g(\ve,v^{(r)},K)\to\capp_{\ve^2I}(K)=\ve^2\capp(K),\q\text{as $r\to0$}
$$
and
$$
\g(\ve,v^{(r)},K)\to\capp_{\bar a(\ve,v)}(K),\q\text{as $r\to\infty$}
$$
where
$$
\bar a(\ve,v)=\ve^2\bar a(\ve^{-2}v).
$$
Fix $\s\in(0,\infty)$ and suppose now that $X$ starts at $\s x$. 
Define as above $T=\inf\{t\ge0:X_t\in K+P\}$ and write
$X^{(\s)}_t=\s^{-1}X_{\s^2t}$ and $T^{(\s)}=\s^{-2}T$.
Then, as $\s\to\infty$,
$$
(X^{(\s)},T^{(\s)})\to(\bar X,\bar T),\q\text{weakly on $C([0,\infty),\R^d)\times[0,\infty)$} 
$$
where $\bar X$ is a 
\index{Brown, R.!Brownian motion|)}Brownian motion in $\R^d$ of
\index{diffusivity|)}diffusivity $\bar a(\ve,v)$
starting from $x$, and where $\bar T$ is an exponential random variable 
independent of $\bar X$, of parameter $\bar\l(\ve,v,K)=\g(\ve,v,\hat K)$. 

\section{Monte Carlo computation of the asymptotic growth rate}\label{KZ}
\index{Markov, A. A.!Markov chain Monte Carlo (MCMC)|(}Let $X$ be as in the
preceding section.
Fix $T\in(0,\infty)$.
The following algorithm may be used to estimate numerically the volume of the $(X,K)$-sausage $S_T=S_T^K(X)$.
The algorithm is determined by the choice of three parameters $N$, $m$,
$J\in\N$.\index{Euler, L.!Euler Maruyama solution@Euler--Maruyama solution}
\begin{itemize}
\item \bf{Step 1:} \normalfont \emph{Compute an
Euler--Maruyama solution
  $(X_{n\D t}:n=0,1,\dots,\allowbreak N)$ to the
\Index{stochastic differential equation}
\begin{equation}\label{SDEE}
dX_t=\ve dW_t+v(X_t)\,dt,\q X_0=x
\end{equation}
up to the final time $T=N\D t$} (Figure \ref{easy_money}a).
\item \bf{Step 2:} \normalfont \emph{Calculate }
\[
R_K=\max_{y\in K,\, 1\le k\le d}|y^k|,\q R_{X,T}=\max_{1\le n\le N,\,1\le k\le d} |X^k_{n\D t}-x^k|.
\]
We approximate $S_T$  by $S_T^{(N)}=\cup_{0\le n\le N}(K+X_{n\D t})$.
Note that $S_T^{(N)}$ is contained in the cube with side-length $L=2(R_K+R_{X,T})$ centred at $x$ 
(Figure \ref{easy_money}b). 
\begin{figure}[htb]  
\begin{center}
\subfigure[Step 1]{\epsfig{file=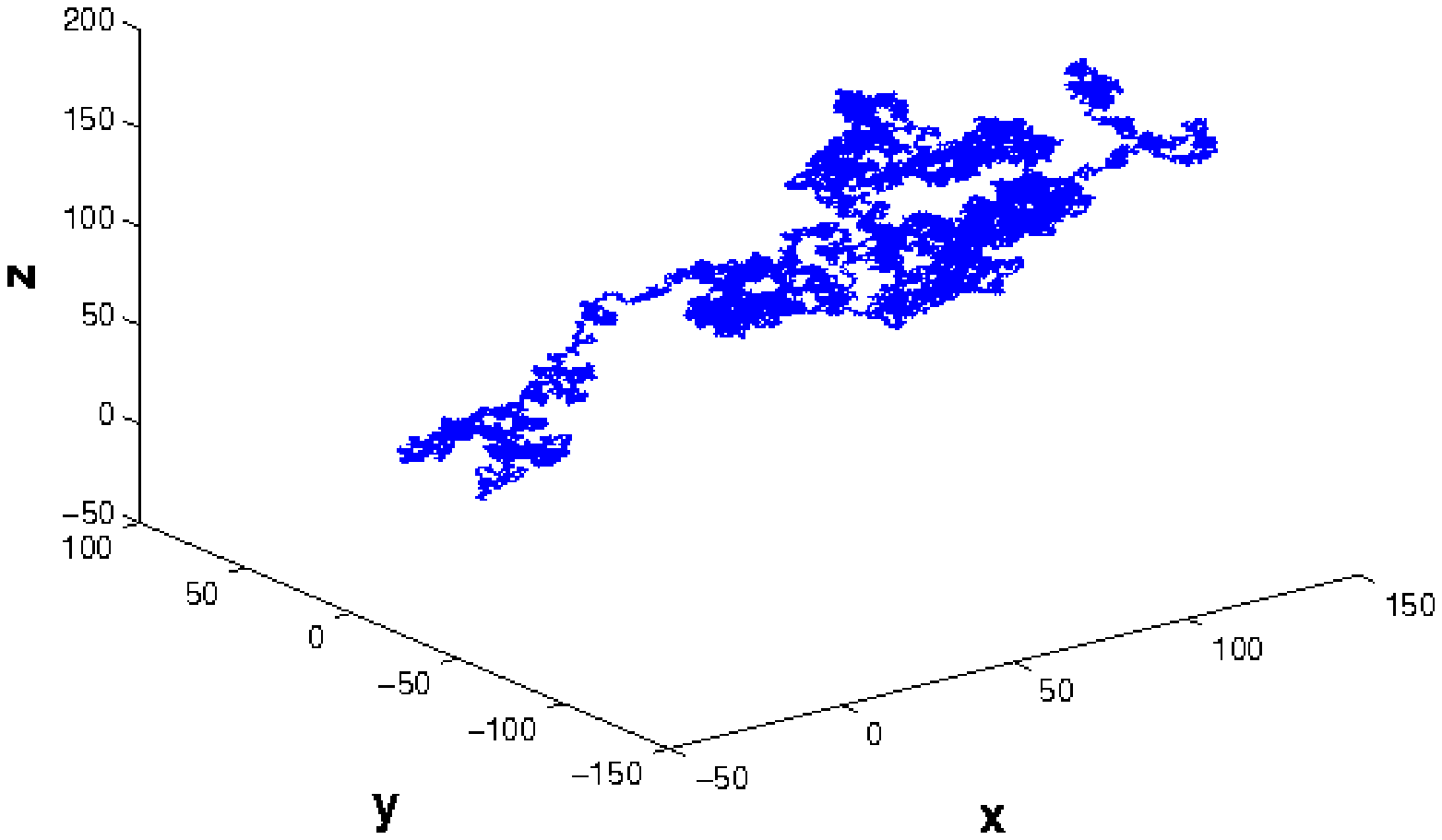,scale=0.3}}
\subfigure[Step 2]{\epsfig{file=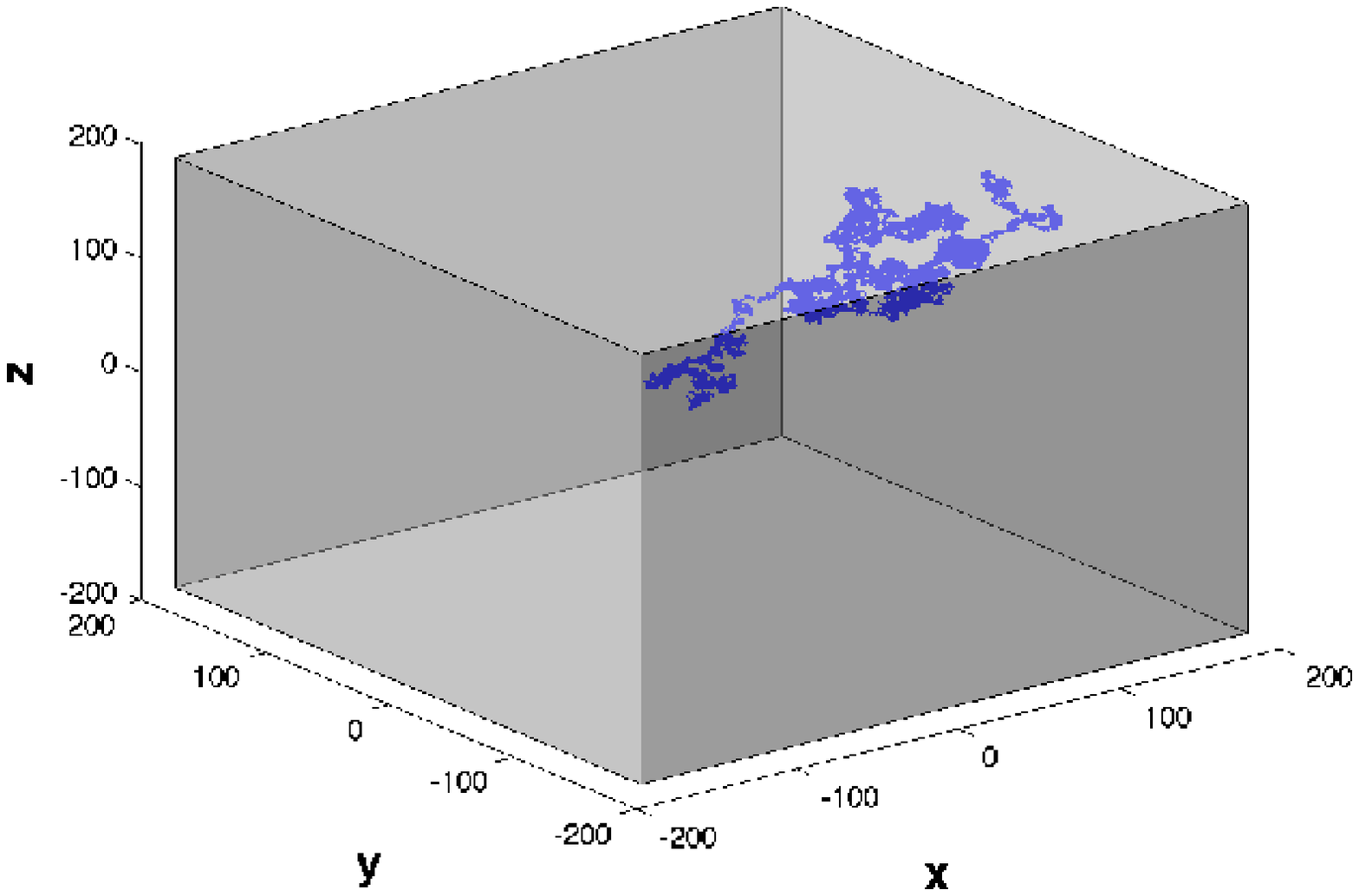,scale=0.3}} 
\caption{First two steps of the algorithm}
\label{easy_money}
\end{center}
\end{figure}
\item \bf{Step 3:} \normalfont \emph{Subdivide the cube of side-length $L$ centred at $x$ into $2^d$ sub-cubes of side-length $L/2$ 
and check which of them have non-empty intersection with $S_T^{(N)}$. 
Discard any sub-cubes with empty intersection. 
Repeat the division and discarding procedure in each of the remaining sub-cubes (Figure \ref{easy_money1})
iteratively to obtain $I$ sub-cubes of side-length $L/2^m$, centred at $y_1$,
\dots, $y_I$ say, whose union
contains $S_T^{(N)}$.}
\begin{figure}[htb]  
\begin{center}
\subfigure[$m=1$, $I=8$]{\epsfig{file=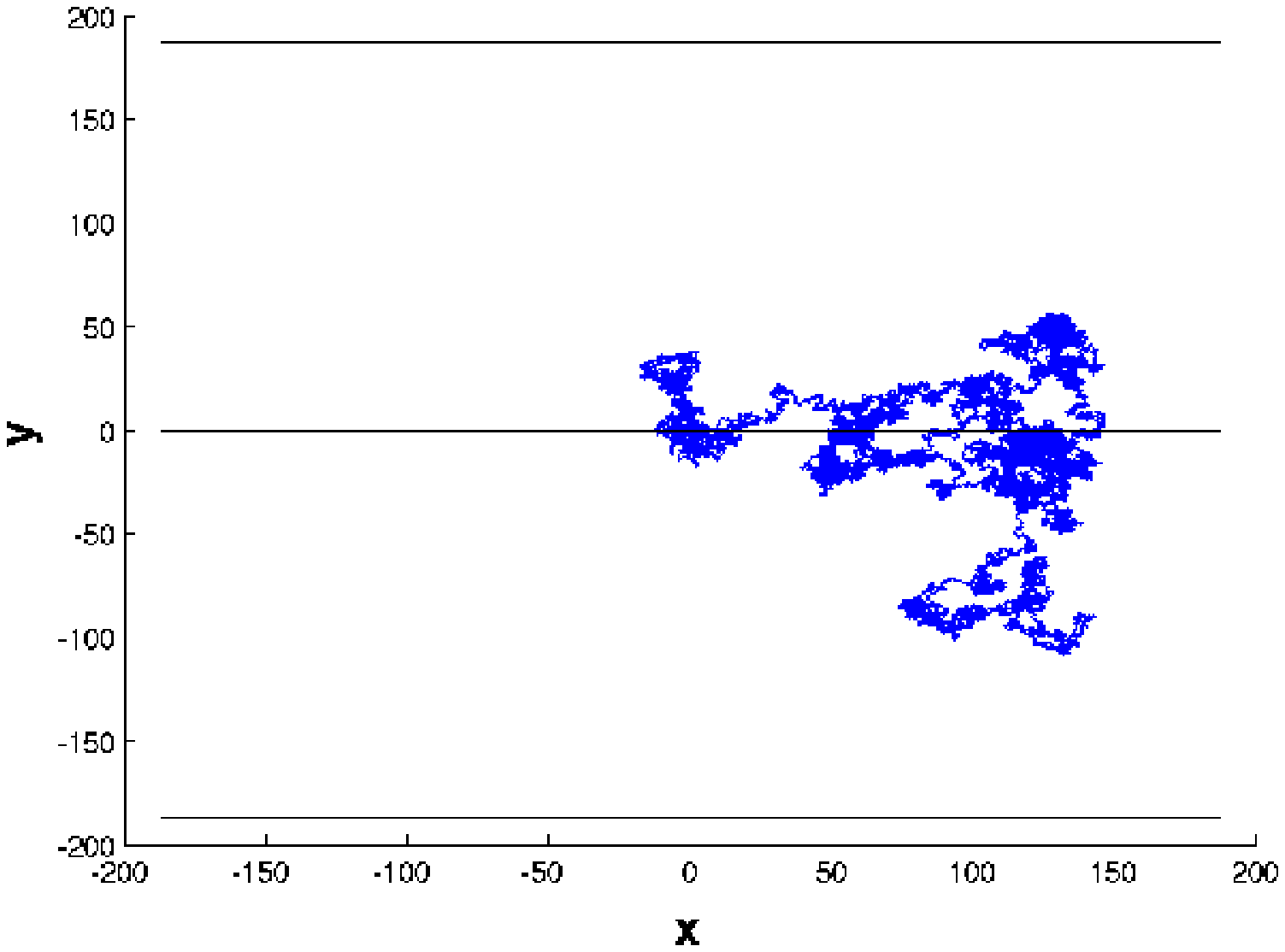,scale=0.3}}
\subfigure[$m=2$, $I=15$]{\epsfig{file=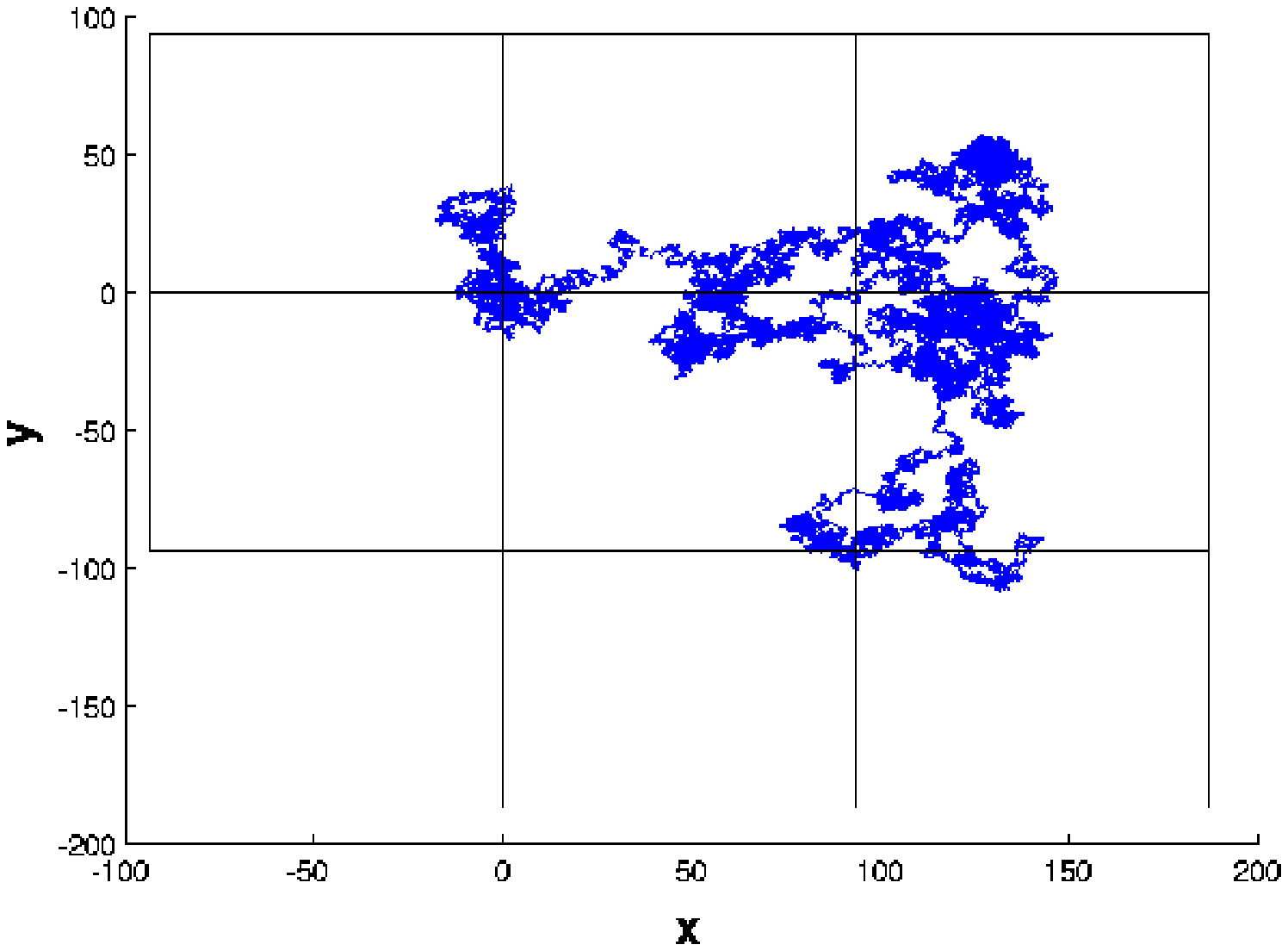,scale=0.3}} 
\subfigure[$m=3$, $I=40$]{\epsfig{file=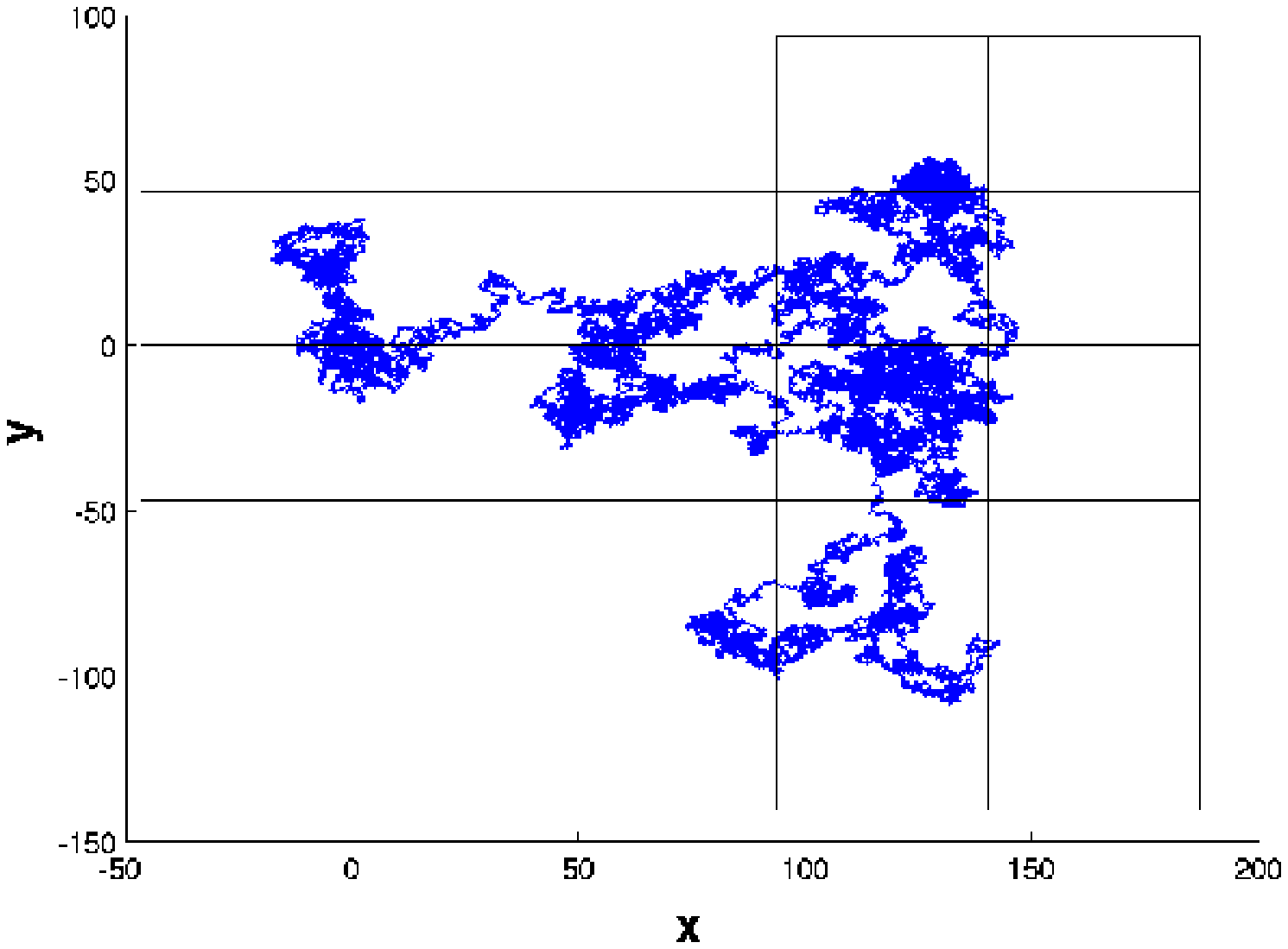,scale=0.3}}
\subfigure[$m=4$, $I=111$]{\epsfig{file=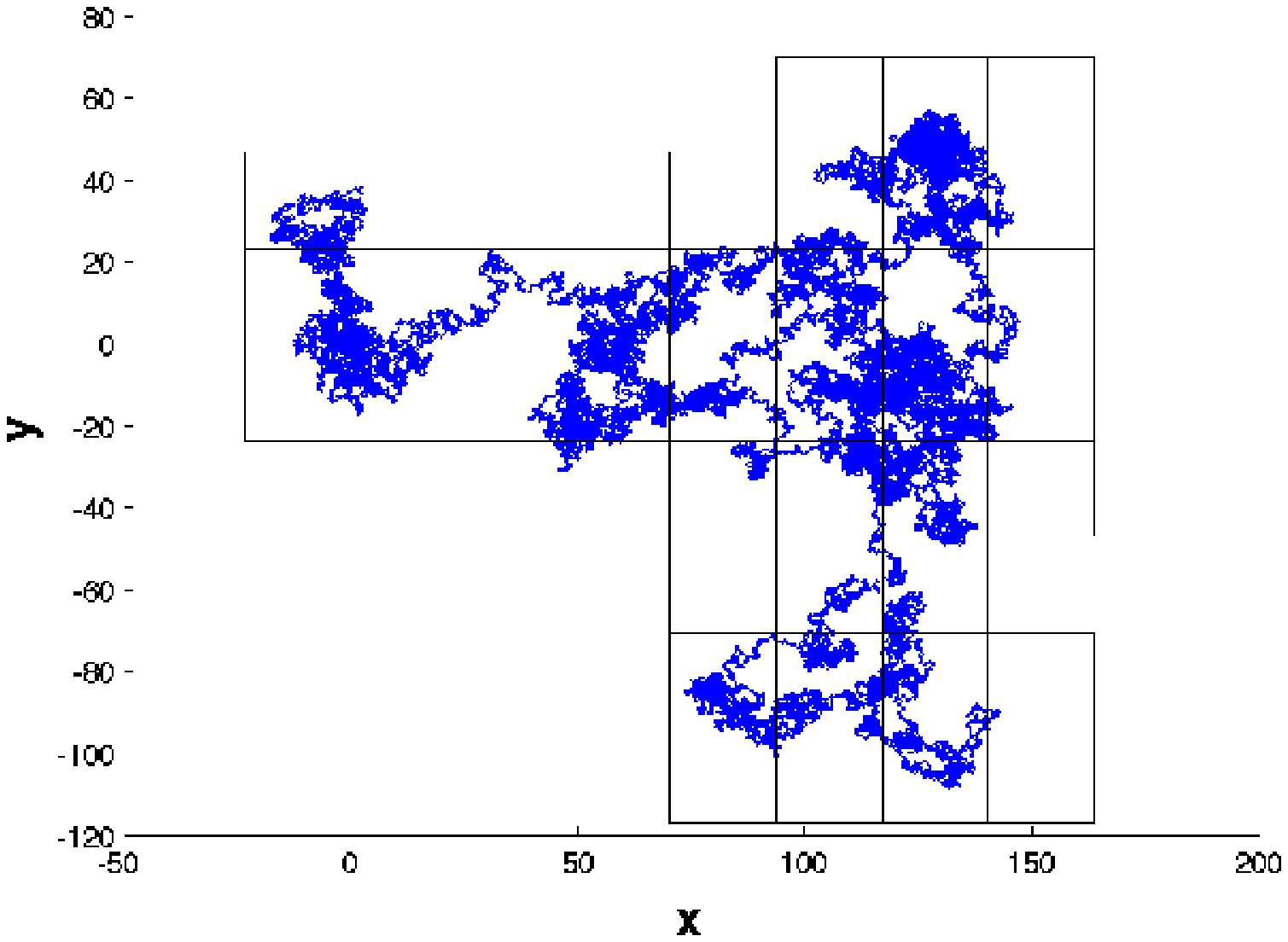,scale=0.3}} 
\caption{$x-y$ \Index{projection} of the path and the sub-cubes for different
values of $m$}
\label{easy_money1}
\end{center}
\end{figure}
\item \bf{Step 4:} \normalfont 
\emph{Generate uniform random variables $U_1$, \dots, $U_J$ in
  $[-1/2,\allowbreak 1/2]^d$ and estimate $V=|S_T|$ by
$$
\hat V=J^{-1}2^{-md}L^d\sum_{i=1}^I\sum_{j=1}^J\left(1-\prod_{n=0}^N1_{A_n(i,j)}\right)
$$
where $A_n(i,j)=\{y_i+2^{-m}LU_j\not\in K+X_{n\D t}\}$.}
\end{itemize}

The algorithm was tested in the case $d=3$.
We took $\ve=0.25$ and took $v$ to be the
\index{Taylor, B.!Taylor--Green vector field}Taylor--Green vector field in
the first two co-ordinate directions; thus
$$
v(x)=(-\sin x_1\cos x_2,\,\cos x_1\sin x_2,\,0)^T.
$$
We applied the algorithm to $X^{(r)}$, which has drift vector field $v^{(r)}(x)=rv(rx)$, for a range of choices of $r\in(0,\infty)$.
We took $K$ to be the Euclidean unit ball $B$ and computed $|S_T^B(X^{(r)})|$ for $T=10^{4}$, using parameter values $N=10^6$, $m=4$ and $J=10^{4}$. 
The numerical method used to solve (\ref{SDEE}) was taken from
\index{Pavliotis, G. A.}\index{Stuart, A. M.}\cite{GPASKZ1}.
The values $|S_T^B(X^{(r)})|/T$ were taken as estimates of the asymptotic volume growth rate $\g(0.25,v^{(r)},B)$.
These are displayed in Figure \ref{so_much}.
\begin{figure}[htb]  
\begin{center}
\epsfig{file=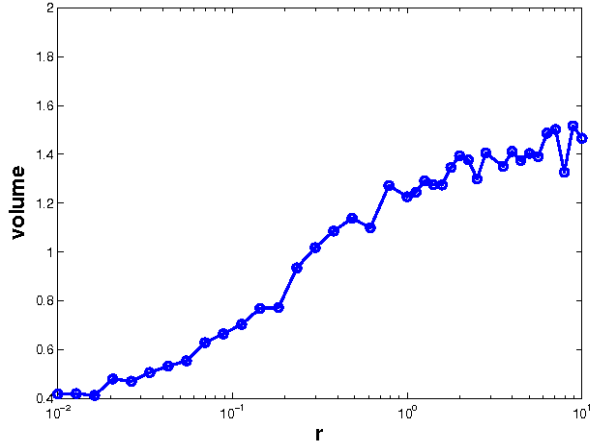,scale=0.5}
\caption{Growth rate of the sausage for different values of $r$}
\label{so_much}
\end{center}
\end{figure}

In Section \ref{DVE} we stated the following theoretical limit, deduced from Theorem \ref{AGRT}:
\begin{equation}\label{e:riders}
\lim_{r\to0}\g(\ve,v^{(r)},B)=\ve^2\capp(B)=2\pi\ve^2=0.3927.
\end{equation}
This is consistent with the computed values of $\g(0.25,v^{(r)},B)$ when $r$ is small.

It is known
\index{Majda, A. J.}\index{Kramer, P. R.}\cite{kramer} that $\bar a(\ve,v)$ has the form
\begin{equation*}\label{e:form_effective}
\bar a(\ve,v)=
\left(
\begin{array}{ccc}
\a  &  0 & 0   \\
0  &  \a & 0  \\
0  & 0  &  \ve^2/2  
\end{array}
\right)
\end{equation*}
for some $\a=\a(\ve,v)$, which can be computed using Monte Carlo
\index{simulation}simulations. 
In \cite{ZygThesis}, this was carried out for $\ve=0.25$ up to a final time $T=10^{4}$, using a time step 
$\Delta t=10^{-2}$, again using a numerical method from \cite{GPASKZ1} to solve (\ref{SDEE}). 
The value $\a(0.25,v)=0.0942$ was obtained as the sample average over $10^4$ realizations of $|X^1_T|^2/T$.
Using this value, we simulated $\bar X$ and used the volume algorithm to compute
$|S_T^B(\bar X)|/T$ as an approximation to $\capp_{\bar a(0.25,v)}(B)$, obtaining the value $1.4587$.
We showed theoretically that
\begin{equation*}\label{xe:riders}
\lim_{r\to\infty}\g(\ve,v^{(r)},B)=\capp_{\bar a(\ve,v)}(B).
\end{equation*}
The computed value for $\capp_{\bar a(0.25,v)}(B)$ is consistent with the
computed values of $\g(0.25,v^{(r)},B)$ for large
$r$.\index{Markov, A. A.!Markov chain Monte Carlo (MCMC)|)}

 \bibliography{hhnz}\label{refs}
 \bibliographystyle{cambridgeauthordate}

\end{document}